\newcommand{\N}{\mathbb{N}}
\newcommand{\R}{\mathbb{R}}
\newcommand{\Q}{\mathbb{Q}}
\newcommand{\Z}{\mathbb{Z}}
\newcommand{\C}{\mathbb{C}}
\newcommand{\I}{\mathds{1}}
\newcommand{\rst}[1]{\ensuremath{{\mathbin\mid}\raise-.5ex\hbox{$#1$}}}
\newcommand{\lie}{\mathfrak{g}}
\DeclareMathOperator{\Irr}{Irr}
\DeclareMathOperator{\Gal}{Gal}
\DeclareMathOperator{\GL}{GL}
\DeclareMathOperator{\Aut}{Aut}
\DeclareMathOperator{\Aff}{Aff}
\author{Karel Dekimpe and Jonas Der\'e\thanks{The second author was supported by a Ph.D.~fellowship of the Research Foundation -- Flanders (FWO).
Research supported by the research Fund of the KU Leuven}\\
KULeuven Kulak, E. Sabbelaan 53, BE-8500 Kortrijk, Belgium}
\title{\bf Existence of Anosov diffeomorphisms on infra-nilmanifolds modeled on free nilpotent Lie groups}
\date{\today}
\newtheorem{Def}{Definition}[section]
\newtheorem{Cor}[Def]{Corollary}
\newtheorem{Thm}[Def]{Theorem}
\newtheorem{Prop}[Def]{Proposition}
\newtheorem{Lem}[Def]{Lemma}
\newtheorem*{Rmk}{Remark}
\begin{document}

\maketitle

\begin{abstract}
An infra-nilmanifold is a manifold which is constructed as a quotient space $\Gamma\backslash G$ of a simply connected nilpotent Lie group $G$, where 
$\Gamma$ is a discrete group acting properly discontinuously and cocompactly on $G$ via so called affine maps. The manifold $\Gamma\backslash G$ 
is said to be modeled on the Lie group $G$. This class of manifolds is conjectured to be the only class of closed manifolds allowing an Anosov diffeomorphism.
However, it is far from obvious which of these infra--nilmanifolds actually do admit an Anosov diffeomorphism. In this paper we completely solve this 
question for infra-nilmanifolds modeled on a free $c$--step nilpotent Lie group. 
\end{abstract}
\section{Anosov diffeomorphisms on infra--nilmanifolds}

There has been quite some activity in the study of Anosov diffeomorphisms on closed manifolds in the last ten to fifteen years
(\cite{dm05-1,deki99-1,deki11-1, dd03-2, dv08-1, dv09-1, dv11-1, fg12-1, laur03-1,laur08-1,lw08-1, lw09-1, main06-1, main12-1, mw07-1,malf97-3, 
payn09-1}). Vaguely said, an Anosov diffeomorphism of a manifold $M$ is a diffeomorphism $f:M\to M$ such that the 
tangent bundle of $M$ allows a $df$--invariant continuous splitting $TM=E^s\oplus E^u$ with $df$ contracting on $E^s$ and 
expanding on $E^u$. Up till now, the only examples of closed manifolds admitting an Anosov diffeomorphism are manifolds which are 
homeomorphic to an infra--nilmanifold. In fact, it has been conjectured that this is the only class of closed manifolds admitting such 
diffeomorphisms. We refer to the papers cited above for more information.

\smallskip

Let us briefly recall what an infra--nilmanifold is and how an Anosov diffeomorphism can be constructed on such a manifold. 
Let $G$ be a connected and simply connected nilpotent Lie group and $\Aut(G)$ the group of continuous automorphisms of $G$. The affine group $\Aff(G)$ is defined as the semi-direct product $G \rtimes \Aut(G)$ and acts on $G$ in the following way:
\begin{eqnarray*}
\forall \alpha = (g, \delta) \in \Aff(G),\; \forall h \in G:\;\;  {}^\alpha h = g \delta(h).
\end{eqnarray*}
Let $C \subseteq \Aut(G)$ be a compact subgroup of automorphisms. A subgroup $\Gamma \subseteq G \rtimes C$ is called an almost--Bieberbach group if $\Gamma$ is a discrete, torsion-free subgroup such that the quotient $\Gamma \backslash G$ is compact. The quotient space $\Gamma \backslash G$ is a closed manifold and is called an infra-nilmanifold modeled on the Lie group $G$. Let $p: \Gamma \to \Aut(G)$ denote the natural projection on the second component, then it is well known that $H = p(\Gamma)$ is a finite group, which is called the holonomy group of $\Gamma$. The subgroup $N = \Gamma \cap G$ is a uniform lattice in $G$ and $\Gamma$ fits in the following exact sequence:
\begin{equation}\label{seq}
\xymatrix{1 \ar[r] & N \ar[r]  & \Gamma \ar[r] & H \ar[r] & 1}.
\end{equation}
In the case where $G$ is abelian, i.e.\ $G \cong \R^n$ for some $n$, the manifolds constructed in this way are exactly the compact flat Riemannian 
manifolds. 
 
\smallskip 

Let $\alpha \in \Aff(G)$ be an affine transformation such that $\alpha \Gamma \alpha^{-1} = \Gamma$. Then $\alpha$ induces a diffeomorphism $\bar{\alpha}$ on the infra--nilmanifold $\Gamma \backslash G$, which is defined by $$\bar{\alpha}: \Gamma \backslash G \to \Gamma \backslash G: \Gamma g \mapsto \Gamma ({}^\alpha g).$$ A diffeomorphism like this is called an affine infra-nilmanifold automorphism. The map $\bar{\alpha}$ is an Anosov diffeomorphism if and only if the linear part of $\alpha$ is hyperbolic, i.e.\ it only has eigenvalues of absolute value different from $1$. It is conjectured that every Anosov diffeomorphism is topologically conjugate to an affine infra-nilmanifold automorphism (see also \cite{deki11-1}).

\smallskip

In \cite{port72-1}, H.~L.~Porteous gives an easy criterion to decide whether  or not a flat manifold allows an Anosov diffeomorphism. 
To formulate this criterion, we have to introduce the holonomy representation. For a flat manifold, the short exact sequence \eqref{seq} is of the 
form $1 \to \Z^n \to \Gamma \to H \to 1$ and hence conjugation in $\Gamma$ induces  a representation $\varphi:H\to \Aut(\Z^n)=\GL_n(\Z)$.
This representation is called the holonomy representation. The criterion of Porteous states that a flat manifold $M$ supports 
an Anosov diffeomorphisms if and only if every $\Q$-irreducible component of $\varphi$ which occurs with multiplicity $1$, splits over $\R$. 
(We can also view $\varphi$ as a rational representation $\varphi:H \to \GL_n(\Q)$ and a real representation $\varphi: H \to \GL_n(\R)$.)
As it was already pointed out in \cite{dv08-1}, a natural class to consider for 
possibly generalizing this statement is the class of infra--nilmanifolds modeled on a free $c$--step nilpotent Lie group, 
i.e.\ Lie groups where the corresponding Lie algebra is free $c$--step nilpotent. 
The flat case is then the case where $c=1$.

\smallskip 

For infra--nilmanifolds which are not flat (so which are not modeled on an abelian Lie group), the short exact sequence \eqref{seq} does not give rise
to a natural holonomy representation $\varphi:H\rightarrow \Aut(N)$. Therefore, a rational holonomy representation was introduced in \cite{dv08-1}.
To obtain this rational holonomy representation, one embeds the group $N$ into its rational Mal'cev completion $N_\Q$ (or radicable hull).
This leads to the following commutative diagram:

$$ \xymatrix{
1 \ar[r] & N \ar[r] \ar@{ >->}[d]& \Gamma \ar[r] \ar@{ >->}[d] & H \ar[r] \ar@{=}[d]& 1 \\
1 \ar[r] & N_\Q \ar[r] & \Gamma_\Q \ar[r] & H \ar[r] & 1,}$$ where the bottom exact sequence splits. By fixing a splitting morphism $s: H \to \Gamma_\Q$, we define the rational holonomy representation $\varphi: H \to \Aut(N_\Q)$ by $$\varphi(f)(n) = s(f) n s(f)^{-1}.$$ 
The rational holonomy representation does depend on the choice of $s$, but this dependence is not relevant in the study of Anosov diffeomorphisms. In fact, it turns out that the rational holonomy representation contains all information about the existence of Anosov diffeomorphisms, see e.g.\ \cite[Theorem A]{dv08-1}. The representation $\varphi$ also induces a representation 
$$\bar{\varphi}: H \to \Aut(N_\Q / [N_\Q,N_\Q]) \cong \GL_n(\Q)\ \  \mbox{(for some $n$)},$$ 
which will be refered to as the abelianized rational holonomy representation. 
Our main theorem, which generalizes the criterion of Porteous and completely finishes the work started in \cite{dv08-1},  states that we can decide whether or not an infra-nilmanifold modeled on a free 
$c$--step nilpotent Lie group has an Anosov diffeomorphism by only looking at the abelianized rational holonomy representation $\bar{\varphi}$:

\begin{Thm}\label{main}
Let $M$ be an infra-nilmanifold modeled on a free $c$--step nilpotent Lie group, with holonomy group $H$ and associated abelianized rational holonomy representation $\overline{\varphi}: H \to \Aut \left( \frac{N_\Q}{[N_\Q,N_\Q]} \right)$. Then the following are equivalent: 
\begin{center}
$M$ admits an Anosov diffeomorphism. \\
$\Updownarrow$ \\
Every $\Q$-irreducible component of
$\bar{\varphi}$ that occurs with multiplicity $m$,\\ splits in more
than $\frac{c}{m}$ components when seen as a representation over
$\R$.
\end{center}
\end{Thm}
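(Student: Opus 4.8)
\emph{Proof proposal.} The plan is to strip the problem down to a purely arithmetic statement about $\bar\varphi$ and then settle it by a ``forced relation'' argument in one direction and a genericity argument in the other.

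First I would use the special shape of $\Aut(\lie_\Q)$ for a free $c$-step nilpotent $\lie$. Since $\lie$ is generated in degree $1$, any automorphism preserves the lower central series and its action on each graded piece is the functorial lift of its action $A$ on $\lie/[\lie,\lie]\cong\Q^n$; hence $\Aut(\lie_\Q)\cong\GL_n(\Q)\ltimes U$ with $U$ unipotent, the splitting being $A\mapsto\widetilde A$ (the $c$-step extension of $A$). As $H$ is finite I may conjugate so that $\varphi(H)$ lies in the $\GL_n(\Q)$-factor and is the functorial lift of $\bar\varphi$. Invoking \cite[Theorem~A]{dv08-1} to transfer the Anosov question to the rational holonomy, the theorem becomes: \emph{there exists $A\in\GL_n(\Q)$ commuting with $\bar\varphi(H)$, conjugate over $\Q$ into $\GL_n(\Z)$, with $\widetilde A$ hyperbolic, if and only if every $\Q$-irreducible component of $\bar\varphi$ of multiplicity $m$ splits into more than $c/m$ real components}. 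The integrality of $A$ is exactly what makes $\widetilde A$ preserve the standard $\Z$-form of the free Lie algebra and have unit eigenvalues, and --- using that the eigenvalues of $\widetilde A$ on $\lie$ are the products $\lambda_{i_1}\cdots\lambda_{i_d}$ running over Hall basis elements of degree $d\le c$ (equivalently, over multisets of size $\le c$ of eigenvalues of $A$ that are not a power of a single one) --- one sees that $\widetilde A$ is hyperbolic exactly when, writing $t_1,\dots,t_n$ for the logarithms of the absolute values of the eigenvalues of $A$, no nonzero $(a_i)\in\Z_{\ge0}^n$ with $\sum a_i\le c$ satisfies $\sum a_i t_i=0$.

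Next I would decompose $\bar\varphi=\bigoplus_t W_t^{\oplus m_t}$ into $\Q$-isotypic components; $A$ preserves each $W_t^{\oplus m_t}$ and lies in the commutant $M_{m_t}(D_t)$ with $D_t=\operatorname{End}_{\Q H}(W_t)$ a division algebra whose centre $E_t$ --- being generated by character values of a finite group --- is totally real or CM. Two facts would carry the argument: (i) $A|_{W_t^{\oplus m_t}}$ has at most $k_tm_t$ distinct eigenvalue-absolute-values, where $k_t$ is the number of $\R$-irreducible summands of $W_t\otimes_\Q\R$ --- checked place by place from the structure of $D_t\otimes_{E_t}E_{t,v}$ and the count of its simple modules; and (ii) since $A$ preserves a lattice, $\det A|_{W_t^{\oplus m_t}}=\pm1$, so the reduced norm of $A|_{W_t^{\oplus m_t}}$ is a unit of $\mathcal O_{E_t}$, giving a nontrivial $\Z_{\ge0}$-relation among those absolute values of total weight at most $k_tm_t$ --- where at complex places, resp.\ real places at which $D_t$ is ramified, one halves the naive weight by pairing complex-conjugate eigenvalues, resp.\ using positivity of the quaternionic reduced norm. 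Granting (i)--(ii), the implication ``$\Rightarrow$'' is immediate: if some $k_tm_t\le c$, the relation from (ii) has weight $\le c$ and, the eigenvectors it involves being linearly independent, is realised by a nonzero bracket in the degree-$(\le c)$ part of $\lie$ (if its weight is $1$ it already places an eigenvalue of $A$ on the unit circle); hence $\widetilde A$ has an eigenvalue of absolute value $1$ and $M$ admits no Anosov diffeomorphism.

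For ``$\Leftarrow$'', assuming $k_tm_t>c$ for all $t$, I would build $A=\bigoplus_tA_t$ with $A_t$ multiplication by a unit $\theta_t$ of a maximal subfield $E'_t$ of $M_{m_t}(D_t)$ containing $E_t$, the field $E'_t$ chosen with its infinite places arranged so that $\theta_t$ has exactly $k_tm_t$ distinct absolute values subject \emph{only} to the reduced-norm relation, now of weight $k_tm_t>c$. Such $A_t$ preserves the lattice $\mathcal O_{E'_t}\subset W_t^{\oplus m_t}\cong(E'_t)^{\ell_t}$ and commutes with $\bar\varphi(H)$. It remains to pick the $\theta_t$ so that jointly no $\Z_{\ge0}$-relation of weight $\le c$ holds among all the $\log|\theta_t|_w$: for each $t$ these vectors range, by Dirichlet, over a full-rank lattice in the hyperplane defined by the reduced-norm relation, and any candidate bad relation of weight $\le c$ that is not forced by the (weight $>c$) per-component relations cuts that lattice down to a proper sublattice; as there are only finitely many candidates, a generic choice of the $\theta_t$ avoids all of them, so $\widetilde A$ is hyperbolic, is integral, and commutes with $\varphi(H)$, and \cite[Theorem~A]{dv08-1} produces the Anosov diffeomorphism. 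I expect the hard part to be not this genericity step but the case analysis behind (i)--(ii) and the construction of $E'_t$ with the prescribed archimedean behaviour: this is where one must balance $k_t$, $m_t$, the Schur index of $D_t$, the degree of $E_t$ and the ramification of $D_t$ at infinity, and it is the reason $k_t$ --- rather than $\dim_\Q W_t$ --- is the correct invariant.
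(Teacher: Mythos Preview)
Your outline is correct and runs parallel to the paper's argument, though phrased in the language of commutant algebras and reduced norms rather than explicit matrix constructions. Both reduce Theorem~\ref{main} to finding a $c$-hyperbolic integer-like matrix in the commutant of $\bar\varphi$; the paper cites \cite{dv11-1} for this, while you re-derive it via \cite[Theorem~A]{dv08-1} and the eigenvalue description on the graded pieces (your parenthetical about Hall basis elements is slightly loose --- not every multiset of size $\le c$ occurs --- but the paper's notion of $c$-hyperbolicity is exactly your ``no $\Z_{\ge0}$-relation of weight $\le c$'', so this does not matter). For the necessity direction, your reduced-norm relation and the paper's Proposition~\ref{det} are the same computation: each generalized eigenspace of $C$ is $H$-stable, hence has dimension a multiple of the common $\R$-irreducible dimension $r$, so $|\det C|=1$ forces a weight-$k_tm_t$ relation. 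For sufficiency, you propose multiplication by a unit in a maximal subfield $E'_t\subset M_{m_t}(D_t)$ with prescribed infinite places, and correctly identify this construction as the hard part. The paper carries out precisely this step in coordinates: $E'_t$ is built as a degree-$m$ Galois extension (Theorem~\ref{ext}) of a minimal splitting field $F$ for $\chi$, and the archimedean control you need is Theorem~\ref{nietsterk} (when $\nu_2(\chi)=1$ one may take $F\subseteq\R$, hence $E'_t$ not totally imaginary) together with the observation that when $\nu_2(\chi)\neq1$ the field $F$ is automatically totally imaginary, matching the halving of $k_t$ against the halving of the Dirichlet rank (Propositions~\ref{hyper} and~\ref{hyper2}). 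One genuine difference in organisation: you handle all isotypic components simultaneously and use a genericity argument to kill cross-component relations, whereas the paper invokes the reduction from \cite{dv11-1} to a single $\Q$-irreducible component (Theorem~\ref{vert}) and thereby avoids that step entirely.
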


For abelian holonomy groups $H$, this was already shown in \cite[Theorem 2.3]{dv11-1}.
From \cite{dv11-1}, we know that the existence of an Anosov diffeomorphisms is equivalent with the existence of a $c$-hyperbolic, integer-like matrix which commutes with every element in the image of $\overline{\varphi}$. Recall that a matrix $C\in \GL_n(\Q)$
is called integer-like if its characteristic polynomial has coefficients in $\Z$ and its determinant is $\pm 1$. A matrix is called $c$-hyperbolic if there does not exists a natural number $k \leq c$ such that the product of any $k$ (not necessarily different) eigenvalues 
of the matrix has absolute value equal to $1$. By decomposing the representation $\overline{\varphi}$ into its $\Q$-irreducible components, it was shown in \cite{dv11-1} that the following theorem implies our main theorem:

\begin{Thm} \label{vert}
Let $H$ be a finite group and $\rho: H \to \GL_n(\Q)$ a $\Q$-irreducible representation. Then there exists a $c$-hyperbolic, integer-like matrix $C\in \GL_{mn}(\Q)$ 
which commutes with 
$m \rho=\underbrace{\rho\oplus \rho \oplus \cdots \oplus \rho}_{m\ {\rm times}}$ 
if and only if $\rho$ splits in strictly more than $\frac{c}{m}$ components when seen as a representation over $\R$.
\end{Thm}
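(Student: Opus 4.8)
The plan is to analyze the rational endomorphism algebra of the representation $m\rho$ and relate the existence of a $c$-hyperbolic integer-like matrix commuting with it to an arithmetic property of the splitting field of $\rho$ over $\R$. First I would invoke the classical description of the commutant: since $\rho$ is $\Q$-irreducible, $D=\mathrm{End}_{\Q[H]}(\Q^n)$ is a division algebra over $\Q$, and the commutant of $m\rho$ is the matrix algebra $M_m(D)$. By Schur's lemma applied over $\R$, the number of real irreducible components into which $\rho$ splits, call it $t$, is governed by how the center $K=Z(D)$ and the division algebra $D$ behave after tensoring with $\R$; concretely, writing $K\otimes_\Q\R\cong\R^{r_1}\oplus\C^{r_2}$ and accounting for whether the Schur index over each real/complex place is $1$ or $2$, one gets $t=r_1'+r_2$ for an appropriate count. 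The key quantitative fact I want to extract is: the "most hyperbolic" integer-like matrix one can hope to build inside $M_m(D)$ has eigenvalues that come in Galois orbits whose sizes, after accounting for the $m$-fold multiplicity and the real structure, force some product of at most $\lceil\frac{mn}{?}\rceil$ eigenvalues onto the unit circle precisely when $t\le \frac{c}{m}$.

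The heart of the argument, and the step I expect to be the main obstacle, is the construction in the "if" direction: given that $\rho$ splits into $t>\frac{c}{m}$ real components, I must exhibit an explicit $c$-hyperbolic integer-like $C\in\GL_{mn}(\Q)$ commuting with $m\rho$. The natural candidate is to choose an algebraic unit $\lambda$ generating a suitable extension of $K$ whose conjugates can be arranged so that no product of $\le c$ of them (with multiplicity $\le m$ available from the $M_m$ factor) has absolute value $1$; here the condition $tm>c$ is exactly what gives enough "room" among the archimedean places to pick the absolute values of the conjugates multiplicatively independent enough. Realizing $\lambda$ as acting $\Q$-linearly and commuting with $\rho$ requires embedding $\Q(\lambda)$ (or a matrix built from it) into $M_m(D)$, which is possible for degree reasons when $[\Q(\lambda):\Q]$ divides $mn$ appropriately and the local invariants match; ensuring simultaneously that the matrix is integer-like (characteristic polynomial in $\Z[x]$, determinant $\pm1$) forces $\lambda$ to be an algebraic unit, and a pigeonhole/linear-algebra argument over the logarithms of archimedean absolute values (à la Dirichlet's unit theorem, exploiting that the unit rank is large because $t$ is large) produces a unit avoiding all the finitely many "bad" multiplicative relations of length $\le c$.

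For the "only if" direction I would argue contrapositively: if $t\le\frac{c}{m}$, then any matrix $C$ commuting with $m\rho$ lies in $M_m(D)$, so its eigenvalues (over $\C$) are distributed among the at most $t$ archimedean components of $D\otimes_\Q\R$, each contributing a block of size at most $mn/t\cdot(\text{something})$ in a way that groups the eigenvalues into Galois-conjugate packets. If $C$ is integer-like, each packet consists of conjugate algebraic units, and since there are at most $t\le\frac{c}{m}$ packets while the total multiplicity structure from $M_m$ means each packet's conjugates repeat $m$ times, one can select at most $c$ eigenvalues (at most $m$ from each of at most $t$ packets) whose product is, up to sign, a norm-type quantity equal to $\pm1$ in absolute value — contradicting $c$-hyperbolicity. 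I would organize the write-up by first recording the commutant structure as a lemma, then the archimedean place count relating $t$ to $D\otimes\R$ as a second lemma, then doing "only if" as the short direction and "if" via the Dirichlet-unit construction as the long one, with the multiplicative-independence pigeonhole isolated as its own sublemma so the eigenvalue bookkeeping stays transparent.
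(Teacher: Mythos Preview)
Your plan is broadly the paper's --- commutant $M_m(D)$, obstruction via a determinant relation, construction via an algebraic unit and Dirichlet --- but the ``only if'' packet argument is not right as stated: $C\in M_m(D)$ is arbitrary, so its eigenvalues are not $m$-fold repeats of elements of $D$, and there is no evident norm-type quantity obtained by selecting one eigenvalue per packet. The paper's argument is cleaner and different in detail: first show (Lemma~\ref{nu} and its Corollary) that all $\R$-irreducible components of $\rho$ have the \emph{same} dimension $r$, via Galois-invariance of the Frobenius--Schur indicator $\nu_2(\chi)$; then every generalized eigenspace of $C$, being $m\rho$-invariant, has dimension $k_ir$, and $1=|\det C|=\bigl(\prod_i|\lambda_i|^{k_i}\bigr)^r$ with $\sum k_i=mt$ shows $C$ is not $mt$-hyperbolic (Proposition~\ref{det}).

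The genuine gap is in the construction, hidden behind your phrase ``the local invariants match''. The paper splits on $\nu_2(\chi)$. If $\chi$ is not afforded over $\R$, every minimal splitting field $F$ is totally imaginary, $t=[F:\Q]/2$, and a degree-$m$ extension $E$ satisfies $[E:\Q]\ge 2c+2$, so Proposition~\ref{hyper2} supplies a $c$-hyperbolic unit. If $\nu_2(\chi)=1$ then $t=[F:\Q]$, and to get a $c$-hyperbolic unit in the degree-$mt$ field $E$ one must invoke Proposition~\ref{hyper}, which requires $E$ to be \emph{not totally imaginary} --- hence $F$ real. When $m_\Q(\chi)=2$ the minimal splitting field is a non-canonical quadratic extension of $\Q(\chi)$, and the existence of a \emph{real} choice is the paper's Theorem~\ref{nietsterk}, proved via $G$-invariant symmetric bilinear forms following Serre. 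This is the technical core of the construction; degree and Brauer-invariant bookkeeping alone do not produce it, and your outline does not isolate this step.
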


In the second section we show the first direction of our main theorem by just looking at the dimension of the real components of a $\Q$-irreducible representation. The other direction is harder and we will need some preliminary results about number theory and splitting fields of representations. The work for this direction is done in the third section. In the last section we shortly explain how the results 
of this paper can be generalized to some more classes of infra-nilmanifolds.  Throughout this paper, we will use results from representation theory of finite groups. We will give explicit references at several places. When needed, the reader can consult  \cite{isaa76-1} (mainly chapters 9 and 10) and 
\cite{serr77-1} for more information.

\section{A condition on the number of components}\label{firstdirection}
All fields we will consider in this paper are assumed to be subfields of the field $\C$ of complex numbers.
Let $G$ be a finite group and $\Irr(G)$ the set of $\C$-irreducible characters of $G$. 
Let $F$ be a field and $\chi$ a character (not necessarily irreducible), then we say that $\chi$ is afforded by a $F$-representation if there exists a representation $\rho: G \to \GL_n(F)$ such that the character of $\rho$ equals $\chi$. For each $\chi \in \Irr(G)$, we can define the complex number $$\nu_2(\chi) = \frac{1}{|G|} \sum_{g \in G} \chi(g^2).$$ It is well known (see e.g. \cite[page 58]{isaa76-1} or \cite[Prop.\ 39, page 109]{serr77-1}) 
that $\nu_2(\chi)$ can only have values $1,0$ and $-1$ with the following possibilities:
\begin{itemize}
\item $\nu_2(\chi) = 0$ if and only if $\chi$ is not real (i.e.\ $\exists g \in G$ such that $\chi(g) \notin \R$).
\item $\nu_2(\chi) = 1$ if and only if $\chi$ is afforded by a real representation.
\item $\nu_2(\chi) = -1$ if and only if $\chi$ is real but not afforded by a real representation.
\end{itemize}

If $\chi$ is an irreducible character, we write $\Q(\chi)$ for the smallest subfield of $\C$ that contains $\chi(g)$ for all $g \in G$. The field $K = \Q(\chi)$ is Galois over $\Q$ and for every $\sigma \in \Gal(K,\Q)$, we can define the map $$\chi^\sigma: G \to K: g \mapsto \sigma(\chi(g)),$$ which we call a Galois conjugate of $\chi$. Every Galois conjugate of $\chi$ is also an irreducible character of $G$ (see \cite[Lemma 9.16]{isaa76-1}). 
It is easy to show that the invariant $\nu_2$ is the same for all of these Galois conjugates:

\begin{Lem}
\label{nu}
Let $\chi \in \Irr(G)$ and $\sigma \in \Gal\left(\Q(\chi),\Q\right)$, then we have that $\nu_2(\chi) = \nu_2(\chi^\sigma).$
\end{Lem}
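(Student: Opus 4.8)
The plan is to compute $\nu_2(\chi^\sigma)$ directly from the definition and relate it back to $\nu_2(\chi)$ using the fact that Galois automorphisms commute with summation and permute the values $\chi(g^2)$ among themselves. Writing out the definition, $\nu_2(\chi^\sigma) = \frac{1}{|G|}\sum_{g\in G}\chi^\sigma(g^2) = \frac{1}{|G|}\sum_{g\in G}\sigma\bigl(\chi(g^2)\bigr)$. Since $\sigma$ is a field automorphism it is additive, so this equals $\sigma\bigl(\frac{1}{|G|}\sum_{g\in G}\chi(g^2)\bigr) = \sigma(\nu_2(\chi))$, provided we check that $\frac{1}{|G|}\sum_{g\in G}\chi(g^2)$ actually lies in $\Q(\chi)$ so that applying $\sigma$ makes sense. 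This membership is immediate: each $\chi(g^2)$ lies in $\Q(\chi)$ by definition of $\Q(\chi)$, the sum of finitely many such elements lies in $\Q(\chi)$, and dividing by the integer $|G|$ stays in $\Q(\chi)$ since $\Q(\chi)$ is a field of characteristic zero containing $\Q$.

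It then remains to observe that $\sigma$ fixes $\nu_2(\chi)$. By the trichotomy recalled just above the lemma, $\nu_2(\chi)\in\{-1,0,1\}\subseteq\Q$, and every element of $\Gal(\Q(\chi),\Q)$ fixes $\Q$ pointwise. Hence $\sigma(\nu_2(\chi)) = \nu_2(\chi)$, and combining with the previous paragraph gives $\nu_2(\chi^\sigma) = \nu_2(\chi)$, as claimed.

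I do not anticipate a genuine obstacle here; the only subtlety worth stating carefully is the justification that $\nu_2(\chi)\in\Q(\chi)$ (so that $\sigma$ may legitimately be applied to it) and that $\nu_2(\chi)$ is in fact rational (so that $\sigma$ fixes it). An alternative, slightly more self-contained route that avoids invoking the $\{-1,0,1\}$ trichotomy is to note that $g\mapsto g^2$ and $g\mapsto\sigma$ interact via a reindexing argument: one can also write $\nu_2(\chi)=\langle\chi,\pi\rangle$ where $\pi$ is the permutation character counting square roots, which is $\Q$-valued, so $\nu_2(\chi)=\langle\chi,\pi\rangle$ is fixed by $\sigma$ because $\langle\chi^\sigma,\pi^\sigma\rangle=\langle\chi,\pi\rangle$ and $\pi^\sigma=\pi$. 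Either way the argument is short; the first is the cleaner one to write down since the trichotomy is already on the page.
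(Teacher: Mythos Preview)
Your proposal is correct and follows essentially the same approach as the paper: compute $\nu_2(\chi^\sigma)$ from the definition, pull $\sigma$ through the sum by additivity, and then use that $\nu_2(\chi)\in\Z$ (the paper says $\Z$, you invoke the trichotomy $\{-1,0,1\}\subseteq\Q$) so that $\sigma$ fixes it. Your extra care in noting that $\nu_2(\chi)\in\Q(\chi)$ before applying $\sigma$ is a nice touch the paper leaves implicit.
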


\begin{proof} By using the formula for $\nu_2$, we have 
\begin{eqnarray*}
\nu_2(\chi^\sigma) = \frac{1}{|G|} \sum_{g \in G} \chi^\sigma(g^2) &=& \frac{1}{|G|} \sum_{g \in G} \sigma( \chi(g^2)) \\
& = & \sigma(\frac{1}{|G|} \sum_{g \in G} \chi(g^2)) \\ & = & \sigma ( \nu_2(\chi)) = \nu_2(\chi)
\end{eqnarray*}
since $\nu_2(\chi) \in \Z$.
\end{proof}

This easy lemma has the following important corollary about the dimension of irreducible components:

\begin{Cor}
{All $\R$-irreducible components of a $\Q$-irreducible representation have the same dimension.}
\end{Cor}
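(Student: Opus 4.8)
The plan is to reduce the statement about $\R$-irreducible components of a $\Q$-irreducible representation $\rho$ to a statement about the $\C$-irreducible characters involved, and then invoke Lemma \ref{nu}. First I would recall the standard structure theory (as in \cite[Chapter 9]{isaa76-1}): a $\Q$-irreducible representation $\rho$ of $G$ corresponds to a single Galois orbit $\{\chi^\sigma : \sigma \in \Gal(\Q(\chi),\Q)\}$ of a $\C$-irreducible character $\chi$, and over $\C$ the representation $\rho$ decomposes as $m_\Q(\chi)$ copies of the sum $\bigoplus_\sigma \chi^\sigma$, where $m_\Q(\chi)$ is the Schur index of $\chi$ over $\Q$. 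In particular every $\C$-irreducible constituent of $\rho$ has the same complex degree $\chi(1)$, since all the Galois conjugates $\chi^\sigma$ satisfy $\chi^\sigma(1) = \sigma(\chi(1)) = \chi(1)$.

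Next I would pass from $\C$ to $\R$. Decomposing $\rho$ over $\R$ instead of over $\C$, each $\R$-irreducible component corresponds, after complexification, either to a single Galois conjugate $\chi^\sigma$ (when $\nu_2(\chi^\sigma)=1$) or to the sum of two complex-conjugate characters $\chi^\sigma \oplus \overline{\chi^\sigma}$ (when $\nu_2(\chi^\sigma)\in\{0,-1\}$), and in the latter two cases an extra factor of $2$ appears in the $\R$-degree when the Schur index over $\R$ is $2$. The key point is that whether we are in the "$\nu_2 = 1$" case or in the "$\nu_2 \in \{0,-1\}$" case is governed precisely by the value $\nu_2(\chi^\sigma)$, and Lemma \ref{nu} tells us this value is the same for every Galois conjugate $\chi^\sigma$. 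Hence all the $\R$-irreducible components of $\rho$ are built from complex characters of the same degree $\chi(1)$ via the same recipe, so they all have the same real dimension; concretely the common dimension is $\chi(1)\cdot m_\R(\chi)$ if $\nu_2(\chi)=1$, and $2\,\chi(1)\cdot m_\R(\chi)$ if $\nu_2(\chi) \in \{0,-1\}$, where $m_\R(\chi)$ denotes the Schur index over $\R$ (which is $1$ unless $\nu_2(\chi)=-1$, in which case it is $2$).

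The main obstacle is bookkeeping rather than conceptual: one must be careful to separate the combinatorial role of the Galois orbit (which accounts for the number of $\C$-components and is uniform in degree by the argument above) from the real-representation-theoretic behaviour at each component (captured by $\nu_2$), and to cite the correct facts from \cite{isaa76-1} about how $\Q$-, $\R$-, and $\C$-irreducible decompositions relate through Schur indices. Once the two uniformities — equal complex degree along a Galois orbit, and constant $\nu_2$ along a Galois orbit (Lemma \ref{nu}) — are in hand, the equality of dimensions of the $\R$-irreducible components is immediate.
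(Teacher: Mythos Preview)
Your approach is essentially the paper's: both arguments use that the $\C$-irreducible constituents of $\rho$ form a single Galois orbit $\{\chi^\sigma\}$ of common degree $\chi(1)$, invoke Lemma~\ref{nu} to see that $\nu_2$ is constant along this orbit, and then split into the cases $\nu_2(\chi)=1$ versus $\nu_2(\chi)\neq 1$ to conclude that the passage from $\C$-irreducibles to $\R$-irreducibles is uniform.

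One small correction to your closing formula: in the quaternionic case $\nu_2(\chi)=-1$ you have $\chi=\overline{\chi}$, and the complexification of the corresponding $\R$-irreducible has character $2\chi$; this factor of $2$ \emph{is} the Schur-index contribution, not something to be multiplied on top of a separate conjugate-pairing factor. So the $\R$-dimension there is $2\chi(1)$, not $2\chi(1)\cdot m_\R(\chi)=4\chi(1)$. The paper handles both non-real-type cases at once by writing the dimension as $\chi(e_G)+\overline{\chi}(e_G)=2\chi(e_G)$. This slip does not affect the logic of your main argument, which only requires that the $\R$-dimension is determined by the pair $(\chi(1),\nu_2(\chi))$.
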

\begin{proof} Let $\chi$ be the character of a $\C$-irreducible component of the $\Q$-irreducible representation $\rho$. 

If $\nu_2(\chi) = 1$, then all irreducible components of $\rho$ are afforded by a real representation, since they are all of the form $\chi^\sigma$ for some $\sigma \in \Gal\left( \Q(\chi), \Q \right)$ (\cite[Lemma 9.21 (c)]{isaa76-1}). Thus the $\R$-irreducible components of $\rho$ are equal to the $\C$-irreducible components and they all have the same dimension $\chi(e_G)$. 

If $\nu_2(\chi) \neq 1$, then none of the irreducible components is afforded by a real representation and thus every $\R$-irreducible component has the same dimension $\chi(e_G) + \overline{\chi}(e_G) = 2  \chi(e_G)$.  \end{proof}

\smallskip

The next proposition follows naturally from the corollary:

\begin{Prop}\label{det}
Let $\rho: G \to GL_{kr}(\Q)$ be a $\Q$-irreducible representation with $r$ the dimension of every $\R$-irreducible component of $\rho$. If $C \in GL_{krm}(\Q)$ commutes with $m \rho = \rho \oplus \rho \oplus \ldots \oplus \rho$ and $|\det(C)| = 1$, then $C$ cannot be $km$-hyperbolic.
\end{Prop}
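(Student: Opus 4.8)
The plan is to exploit the fact that $C$ commutes with $m\rho$, which is a representation of a finite group, so that Schur-type constraints pin down the possible eigenvalues of $C$. First I would decompose the $\Q$-irreducible representation $\rho$ over a splitting field, say over $\C$: write $\rho \otimes \C \cong \bigoplus_{i=1}^{s} \chi^{\sigma_i}$, where the $\chi^{\sigma_i}$ are the $\C$-irreducible constituents (all Galois conjugates of one character $\chi$, by \cite[Lemma 9.21]{isaa76-1}), each of dimension $d = \chi(e_G)$, so that $kr = sd$. The centralizer of $m\rho$ inside $\GL_{krm}(\C)$ is, by Schur's lemma, a product of matrix algebras: on each isotypic block coming from $\chi^{\sigma_i}$ the centralizer is $\GL_m(\C)$ (tensored with the identity on a $d$-dimensional space), so $C$ acts as $A_i \otimes I_d$ for some $A_i \in \GL_m(\C)$. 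Consequently the eigenvalues of $C$ are exactly the eigenvalues of the $A_i$, each appearing with multiplicity (at least) $d$.

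Next I would bring in the constraint $|\det(C)| = 1$. Since $\det(C) = \prod_{i=1}^{s} \det(A_i)^d$, and the real-component dimension is $r = d$ when $\nu_2(\chi)=1$ (so $s=k$) or $r = 2d$ when $\nu_2(\chi)\neq 1$ (so $s = 2k$) — in either case $sd = kr$, and more importantly $s \geq k$ — we get that the product of all eigenvalues of all the $A_i$ together has absolute value $1$. Here is the key counting point: the $A_i$ together have $sm \geq km$ eigenvalues (counted with multiplicity), and their product has modulus $1$. To conclude that $C$ is not $km$-hyperbolic I need to produce a subcollection of at most $km$ eigenvalues of $C$ whose product has modulus $1$. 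The natural move is to take the full collection of $sm$ eigenvalues of all the $A_i$'s; this product has modulus $1$, but $sm$ may exceed $km$ (when $s = 2k$), so I cannot use the whole collection directly.

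The hard part will be handling the case $\nu_2(\chi) \neq 1$, where $s = 2k$ and a naive product of all $2km$ eigenvalues is too large to witness failure of $km$-hyperbolicity. The resolution should be that when $\chi$ is not real, the constituents come in complex-conjugate pairs $\chi^{\sigma}, \overline{\chi^{\sigma}}$, and since $C$ is a \emph{rational} matrix it commutes with complex conjugation, forcing $A_{\bar\sigma} = \overline{A_\sigma}$ up to the identification of the conjugate blocks; hence the eigenvalues of $C$ already occur in conjugate pairs $\{\mu, \bar\mu\}$, and $\mu\bar\mu = |\mu|^2$. Picking one eigenvalue from each conjugate pair among the $A_i$ gives $km$ eigenvalues $\mu_1,\dots,\mu_{km}$ of $C$; then $\mu_1\bar\mu_1\cdots\mu_{km}\bar\mu_{km} = |\det(C)| = 1$, so $|\mu_1\cdots\mu_{km}|^2 = 1$, i.e. the product of these $km$ eigenvalues has modulus $1$. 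In the real case $\nu_2(\chi)=1$ one has $s=k$ and the product of all $km$ eigenvalues of the $A_i$ equals $\det(C)$ up to the factor $d$... more carefully, $|\det(C)|=\prod|\det A_i|^d=1$ forces $\prod_i|\det A_i|=1$ (as these are the only factors), and the $km$ eigenvalues of the $A_i$ multiply to something of modulus $1$. Either way we exhibit $km$ (not necessarily distinct) eigenvalues of $C$ with product of absolute value $1$, so $C$ is not $km$-hyperbolic. I would write this up by first recording the Schur-lemma structure of the centralizer as a short lemma, then splitting into the two cases according to $\nu_2(\chi)$, with the conjugation argument being the one genuinely new ingredient.
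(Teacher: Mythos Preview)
Your argument handles the cases $\nu_2(\chi)=1$ and $\nu_2(\chi)=0$, but it misses the quaternionic case $\nu_2(\chi)=-1$. There $\chi$ is real-valued, so every Galois conjugate $\chi^{\sigma_i}$ is its own complex conjugate and the $\C$-constituents do \emph{not} come in conjugate pairs; the pairing argument you give for ``$\nu_2(\chi)\neq 1$'' therefore does not apply. In this case $m_\Q(\chi)=2$, $r=2d$, $k=n$, and on each of the $n$ isotypic blocks $C$ acts through some $A_i\in\GL_{2m}(\C)$, giving $2km$ eigenvalues in total; you still have to extract $km$ of them with product of modulus $1$. That is possible, but it requires the additional observation that the real endomorphism algebra of the $\R$-irreducible is $\mathbb{H}$, so that in fact $A_i\in\GL_m(\mathbb{H})\subset\GL_{2m}(\C)$ and the eigenvalues of each $A_i$ come in conjugate pairs (with real eigenvalues of even multiplicity). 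This step is absent from your outline. There is also a smaller bookkeeping slip: whenever $m_\Q(\chi)>1$ the $\C$-constituents of $\rho$ already appear with multiplicity, so the centralizer on the $\chi^{\sigma_i}$-isotypic block of $m\rho$ is $\GL_{m\cdot m_\Q(\chi)}(\C)$, not $\GL_m(\C)$.

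The paper sidesteps all of this by working over $\R$ rather than $\C$. One puts $C$ in real Jordan form, so that $C$ is block-diagonal with each block $C_i$ carrying a single real eigenvalue or a single conjugate pair $\lambda_i,\bar\lambda_i$. Since $C$ commutes with $m\rho$, each real generalized eigenspace is $G$-invariant and hence a sum of $\R$-irreducibles, all of dimension $r$; thus $C_i$ has size $k_ir$. Then $1=|\det C|=\prod_i|\lambda_i|^{k_ir}$ gives $\prod_i|\lambda_i|^{k_i}=1$ with $\sum_ik_i=km$, which already witnesses that $C$ is not $km$-hyperbolic. No case split on $\nu_2(\chi)$, no Schur-index bookkeeping, and no analysis of the centralizer algebra is needed.
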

\begin{proof} By using the real Jordan canonical form of $C$ (see e.g. \cite[Theorem 3.4.5.]{hj96-1}), we find an invertible matrix $A \in GL_{krm}(\R)$ such that $$ A C A^{-1} = \begin{pmatrix} C_1 & 0 & \hdots & 0 \\ 0 & C_2 & \hdots & 0  \\ \vdots & \vdots& \ddots & \vdots\\0 &0 & \hdots  & C_l	\end{pmatrix}$$ where $C_i$ and $C_j$ have distinct eigenvalues for $i \neq j$ and such that $C_i$ either has one real eigenvalue or two complex conjugate eigenvalues. Let $\lambda_i$ be one (of the at most two) eigenvalue(s) of $C_i$. So all $\lambda_i$ are distinct by construction. By conjugating with the same matrix $A$, the representation $m \rho$ also splits over $\R$, since the generalized eigenspaces of $C$ are obviously invariant under the representation $m\rho$. This implies that each $C_i$ has dimension $k_i r$ for some $k_i \in \N_0$, because all real components of $m \rho$ have the same dimension $r$. So for the determinant of $C$, we have
\begin{eqnarray*}
1 = |\det(C)| = \prod_{i=1}^l |\det(C_i)| &=& \prod_{i=1}^l |\lambda_i|^{k_ir}  \\ &=&  \left( \prod_{i=1}^l |\lambda_i|^{k_i} \right)^r 
\end{eqnarray*}
and therefore $\prod_{i=1}^l |\lambda_i|^{k_i} = 1$. This means that $C$ is not $km$ hyperbolic, since $\sum_{i=1}^l k_i  = km$ by looking at the size of $C$. \end{proof}

Proposition~\ref{det} already implies one direction of Theorem~\ref{vert}. Indeed, if there exists a $c$-hyperbolic, integer-like matrix that commutes with $m \rho$, then the $k$ of Proposition~\ref{det} must be strictly larger than $\frac{c}{m}$. Note that this $k$ is the number of $\R$-irreducible components of $\rho$ and thus gives us one direction of Theorem~\ref{vert}.

\section{Construction of an Anosov diffeomorphism}

The proof of the other direction needs some more work and uses different results of number theory and representation theory for finite groups. The first step is investigating how a $\Q$-irreducible representation splits over certain fields, especially over minimal splitting fields. Next, we consider the existence of $c$-hyperbolic units in number fields, which correspond to $c$-hyperbolic matrices. By combining these results, we can construct a $c$-hyperbolic, integer-like matrix $C$ which commutes with the given representation. In this section, we assume that all groups $G$ are finite.

\subsection{Decomposition over a minimal splitting field}
If $\rho:A\rightarrow \GL_n(\Q)$ is a $\Q$-irreducible representation of a finite abelian group $A$, then its image is always cyclic. 
Indeed, Schur's lemma states that the set of $A$--endomorphisms of the rational vectorspace $\Q^n$ is a skew field. Note that $\rho(A)$ is an abelian subgroup 
of the multiplicative group of this skew field, hence $\rho(A)$ is cyclic (see e.g.\ \cite{hers53-1}). It follows also that a
generator $\rho(a)$ of $\rho(A)$ has a cyclotomic polynomial as its characteristic polynomial (see \cite[pages 570--571]{dv08-1} for more details). 

\medskip

A consequence is that in this case every $\Q$-irreducible representation has a basis of eigenvectors which are Galois conjugates. In this section we answer the question how a general representation splits over a minimal splitting field. For the formulation of this result we will make use of permutation matrices.

\medskip

Let $K=\Q(\theta) \supseteq \Q$ be an extension of degree $n$ and $\{\sigma_1, \ldots, \sigma_n\}$ the set of monomorphisms $K \to \C$. We say that $\sigma_i$ is real if $\sigma_i(K) \subseteq \R$, otherwise we call $\sigma_i$ complex. If $\sigma_i$ is complex then $\overline{\sigma_i}$ is a different monomorphism $K\to \C$, so the complex monomorphisms come in pairs. We conclude that $n = s + 2t$ with $s$ the number of real monomorphisms and $2t$ the number of complex monomorphisms. Let us remark that every monomorphism $\sigma_i$ is completely determined by the image of $\theta$, namely $\theta_i = \sigma_i(\theta)$. If $t = 0$, so if every $\sigma_i$ has a real image, then we call the field $K$ totally real. When $s=0$ we will say that the field is totally imaginary.

\smallskip

Let $\rho$ be a $\Q$-irreducible representation of a finite group $G$. From \cite[Theorem 9.21]{isaa76-1} we know that each $\C$-irreducible component of $\rho$ occurs with the same multiplicity $m$. If $\chi \in \Irr(G)$ is the character of one of those components, then the set of characters of all $\C$-irreducible components of $\rho$ is given by the Galois conjugates of $\chi$. So if we have $\{\sigma_1, \ldots, \sigma_n\} = \Gal(\Q(\chi),\Q)$, we can write the character $\chi_\rho$ of $\rho$ as 
$$\chi_\rho = m \chi^{\sigma_1} + \ldots + m \chi^{\sigma_n}.$$ 
The fixed multiplicity $m$ is called the Schur Index of $\chi$ over $\Q$ and is written as $m_\Q(\chi)$ (see \cite[pages 160--161]{isaa76-1}). The number 
$m_\Q(\chi)$ is also the smallest integer $m$ such that $m\chi$ is afforded by a $\Q(\chi)$--representation. Moreover, it is also given by the minimal degree of a field extension $\Q(\chi) \subseteq F$ such that $\chi$ is afforded by an $F$-representation (\cite[Theorem 10.17]{isaa76-1}). 
Such a field $F$ is called a minimal splitting field for the character $\chi$. Note that the number of $\C$-irreducible components of $\rho$ is equal to $m\cdot n = [F:\Q(\chi)]\cdot [\Q(\chi):\Q]=[F:\Q]$.

\smallskip

Let $\pi \in S_n$ be a permutation, then there exists a permutation matrix $K_\pi \in \GL_n(\Z)$, which is defined by $$(K_\pi)_{ij} = \left\{ \begin{matrix}1 & j = \pi(i) \\ 0 & \mbox{otherwise} \end{matrix}\right.$$ Note that the relation $K_{\pi_1} K_{\pi_2} = K_{\pi_2 \pi_1}$ holds for all $\pi_1, \pi_2 \in S_n$, so in particular we have that $K_{\pi}^{-1} = K_{\pi^{-1}}$. 
Let $M$ be a matrix in $GL_n(\C)$, then $K_\pi M$ is the matrix $M$ where the rows are permuted according to $\pi$, so the $i$-th row of $K_\pi M$ is the $\pi(i)$-th row of $ M$.
If we write $$M = \begin{pmatrix}
m_1 \\ m_2 \\ \vdots \\ m_n 
\end{pmatrix},$$ as a column of row vectors, then another way of saying this is that $$K_\pi M = \begin{pmatrix}
m_{\pi(1)} \\ m_{\pi(2)} \\ \vdots \\ m_{\pi(n)} 
\end{pmatrix}.$$
In the same way, $M K_\pi$ is the matrix $M$ where the columns are permuted according to $\pi^{-1}$. This last property can easily be checked by observing that the transpose $(K_\pi)^T=K_{\pi^{-1}}$ and then writing 
$M K_\pi=(K_{\pi^{-1}} M^T) ^T$. The Kronecker product $K_\pi \otimes \I_k \in \GL_{kn}(\Z)$ will be denoted as $K_\pi^{\otimes k}$. If we write a matrix $M \in GL_{kn}(\C)$ as a column of $k \times kn$ matrices $$M = \begin{pmatrix}
m_1 \\ m_2 \\ \vdots \\ m_n 
\end{pmatrix},$$ then similarly as above $K_\pi M$ is the matrix $M$ with the block matrices $m_i$ permuted according to the permutation $\pi$.

\smallskip

Let $E$ be a finite Galois extension over $\Q$. Take a subfield $F = \Q(\theta) \subseteq E$, which is not necessarily Galois over $\Q$, and denote by $\sigma_1,\ldots,\sigma_n$ the $n$ distinct monomorphisms $F \to \C$. Because $\theta_i = \sigma_i(\theta) \in E$, we have that $\sigma_i(F)\subseteq E$ for all $i$. 
It follows that for any $\sigma \in \Gal(E,\Q)$ and any $i\in \{1,2,\ldots,n\}$, 
we have that $\sigma \circ \sigma_i:F \to E \subseteq \C$ is again a monomorphism of 
$F$ in $\C$. Hence $\sigma$ induces a permutation on $\{\sigma_1,\sigma_2,\ldots,
\sigma_n\}$. Associated to this is a permutation $\pi_\sigma \in S_n$ which is determined by 
\[ \sigma \circ \sigma_i = \sigma_{\pi_\sigma(i)}.\]
We let $K_\sigma \in \GL_n(\Z)$ denote the corresponding
permutation matrix.

\smallskip

We already mentioned above that for every $\Q$-irreducible representation $\rho$ of a finite abelian group, one can always find a basis of eigenvectors which are Galois-conjugates, see \cite{dv11-1}. This means that if $E$ is a minimal splitting field of $\rho$ with $\Gal(E,\Q) = \{ \sigma_1, \ldots, \sigma_n \}$, there exists a vector $v_0 \in E^n$ such that with respect to the basis 
$\{ v_1=\sigma_1(v_0), v_2=\sigma_2(v_0), \ldots, v_n = \sigma_n(v_0)\}$, the representation is diagonal. 

\smallskip

The main part of this subsection is to generalize this statement to arbitrary groups:
\begin{Thm}
\label{sigmas}
Let $\chi \in \Irr(G)$ be the character of an irreducible component of a $\Q$-irreducible representation $\rho$. Let $\Q \subseteq F$ be a field extension of minimal degree such that $\chi$ is afforded by a $F$-representation, say $\rho_0$ and assume that $k$ is the dimension of this representation. If $[F:\Q] = n$ and $\sigma_1, \ldots, \sigma_n$ are the $n$ distinct monomorphisms $F \to \C$ and if $E$ is any field extension $\Q \subseteq F \subseteq E$ such that $E$ is Galois over $\Q$, then there exist a $P \in \GL_{kn}(E)$ such that the following conditions hold:
\begin{enumerate}[(1)]
\item  $P^{-1} \rho P = \begin{pmatrix}
\sigma_1(\rho_0) & 0 & \hdots & 0\\
0 & \sigma_2(\rho_0) & \hdots & 0 \\
\vdots & \vdots & \ddots & \vdots \\
0 & 0 & \hdots & \sigma_n(\rho_0)
\end{pmatrix}.$ 
\item  For all $\sigma \in \Gal(E,\Q)$: \  $\sigma(P)= P K_{\sigma^{-1}}^{\otimes k}$.
\end{enumerate}
\end{Thm}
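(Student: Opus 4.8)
The plan is to build $P$ explicitly from the representation $\rho$ together with the $F$-representation $\rho_0$, exploiting that $\rho$ and $\bigoplus_{i=1}^n \sigma_i(\rho_0)$ have the same character and hence are conjugate over $E$, and then to massage the conjugating matrix so that it also satisfies the Galois-equivariance condition (2). First I would record the relevant character identity: by the discussion preceding the theorem, the $\C$-irreducible constituents of $\rho$ are exactly the Galois conjugates $\chi^{\sigma_i}$, each with multiplicity $m_\Q(\chi) = [F:\Q(\chi)]$, and $\sigma_i(\rho_0)$ affords $\chi^{\sigma_i}$ (viewing $\sigma_i(\rho_0)$ as a representation over $\sigma_i(F)\subseteq E$). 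Since $\rho_0$ has dimension $k$ and $\chi(e_G)\cdot m_\Q(\chi)\cdot n$ equals the dimension of $\rho$, the block matrix on the right of (1) has the right size $kn$, and it affords the same character as $\rho$. Because $E$ is a splitting field for $G$ (it contains all the $\chi^{\sigma_i}$ and is large enough to realize them, as $F\subseteq E$ realizes $\chi$ already), two $E$-representations with the same character are $E$-conjugate; this produces some $Q\in\GL_{kn}(E)$ with $Q^{-1}\rho Q$ equal to the block-diagonal matrix.

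The substance is to upgrade $Q$ to a $P$ that also satisfies (2). Here is the mechanism I expect to use. Fix $\sigma\in\Gal(E,\Q)$. Apply $\sigma$ entrywise to the equation $Q^{-1}\rho Q = D$ where $D = \operatorname{diag}(\sigma_1(\rho_0),\ldots,\sigma_n(\rho_0))$. Since $\rho$ takes values in $\GL(\Q)$, $\sigma(\rho)=\rho$; on the other hand $\sigma(\sigma_i(\rho_0)) = (\sigma\sigma_i)(\rho_0) = \sigma_{\pi_\sigma(i)}(\rho_0)$, so $\sigma(D)$ is $D$ with its diagonal blocks permuted by $\pi_\sigma$, i.e.\ $\sigma(D) = (K_{\sigma^{-1}}^{\otimes k})^{-1} D\, (K_{\sigma^{-1}}^{\otimes k})$ — one has to check the index bookkeeping matches the conventions fixed in the excerpt ($K_\pi$ acting on a column of block rows by permuting blocks according to $\pi$, and $K_{\pi_1}K_{\pi_2}=K_{\pi_2\pi_1}$), and this is exactly where the choice of $K_{\sigma^{-1}}$ rather than $K_\sigma$ in statement (2) comes from. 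Combining, $\sigma(Q)^{-1}\rho\,\sigma(Q) = \sigma(D) = (K_{\sigma^{-1}}^{\otimes k})^{-1} Q^{-1}\rho\, Q\, K_{\sigma^{-1}}^{\otimes k}$, which rearranges to say that $Q\, K_{\sigma^{-1}}^{\otimes k}\, \sigma(Q)^{-1}$ centralizes $\rho$. By the abelian case discussed just before the theorem (or more precisely by Schur's lemma applied to the $\C$-irreducible blocks), the centralizer of $D$ — equivalently, after conjugating back, of $\rho$ — is a product of matrix algebras over the relevant division algebras; the goal is to correct $Q$ by a suitable centralizing factor so that $Q\,K_{\sigma^{-1}}^{\otimes k}\,\sigma(Q)^{-1}$ becomes the identity for every $\sigma$ simultaneously.

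The hard part will be this last correction step: one needs the map $\sigma\mapsto c_\sigma := Q^{-1}\sigma(Q) (K_{\sigma^{-1}}^{\otimes k})^{-1}$ to form a cocycle valued in the centralizer $C_\rho$ (checking the cocycle identity uses $\sigma(K_\tau^{\otimes k})=K_\tau^{\otimes k}$, since permutation matrices have rational entries, together with $\pi_{\sigma\tau}=\pi_\sigma\pi_\tau$ in the appropriate order), and then to trivialize it. I would trivialize it by a Hilbert 90-type / Noether's theorem argument: the twisted action $\sigma\cdot X := c_\sigma\, \sigma(X)\, (K_{\sigma^{-1}}^{\otimes k})$ on $\GL_{kn}(E)$, or rather the associated semilinear action on $E^{kn}$, has $\Q$-rational fixed points because $H^1$ of a Galois group with coefficients in $\GL$ of a matrix algebra (a form of $\GL_N$) is trivial; choosing $S\in\GL_{kn}(E)$ with $\sigma(S) = c_\sigma^{-1} S$ for all $\sigma$ and setting $P = QS$ (after checking $S$ can be taken to centralize $D$, so that (1) is preserved) gives $\sigma(P) = \sigma(Q)\sigma(S) = (Q c_\sigma K_{\sigma^{-1}}^{\otimes k})(c_\sigma^{-1}S)$; since $c_\sigma$ centralizes $D$ and $K_{\sigma^{-1}}^{\otimes k}$ conjugates $D$ to $\sigma(D)$, one reorganizes this to $P K_{\sigma^{-1}}^{\otimes k}$, which is (2). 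The remaining routine verifications are: that $S$ may be chosen inside the centralizer (so it commutes past $K_{\sigma^{-1}}^{\otimes k}$ up to the permutation, which is what is needed), and the explicit index chase confirming $\sigma(D)=(K_{\sigma^{-1}}^{\otimes k})^{-1} D K_{\sigma^{-1}}^{\otimes k}$ under the paper's conventions — these I would not grind through here.
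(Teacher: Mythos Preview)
Your strategy is genuinely different from the paper's, and although the underlying idea (Galois descent) is sound, the execution as written has a real gap in the ``reorganization'' step.

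The paper proceeds in the opposite order: it first constructs a matrix satisfying condition (2) \emph{explicitly}, and only afterwards arranges condition (1) by a rational base change. Concretely, writing $F=\Q(\theta)$, the paper sets
\[
Q=\begin{pmatrix}\sigma_1(1)&\sigma_1(\theta)&\cdots&\sigma_1(\theta^{n-1})\\ \vdots&&\ddots&\vdots\\ \sigma_n(1)&\sigma_n(\theta)&\cdots&\sigma_n(\theta^{n-1})\end{pmatrix}\otimes \I_k
\]
and $P=Q^{-1}$. From $\sigma(Q)=K_\sigma^{\otimes k}Q$ one gets $\sigma(P)=PK_{\sigma^{-1}}^{\otimes k}$ directly. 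One then checks, exactly from this relation, that $P\tilde\rho P^{-1}$ has rational entries (where $\tilde\rho$ is the block-diagonal representation), and since it has the same character as $\rho$ and the same dimension, it is $\Q$-equivalent to $\rho$; composing $P$ with that rational equivalence does not disturb (2). No cohomology is needed.

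In your approach the problematic point is the claim that $c_\sigma:=Q^{-1}\sigma(Q)(K_{\sigma^{-1}}^{\otimes k})^{-1}$ is a cocycle (for the ordinary Galois action) that can be killed by choosing $S$ with $\sigma(S)=c_\sigma^{-1}S$. First, $c_\sigma$ lands in the centralizer of $D$, not of $\rho$. Second, and more seriously, $c_\sigma$ is \emph{not} a $1$-cocycle for the untwisted action: the map $\sigma\mapsto K_{\sigma^{-1}}^{\otimes k}$ is itself a nonabelian cocycle, and dividing one cocycle by another in a nonabelian group does not produce a cocycle. The correct framework is to equip $C(D)^\times$ with the \emph{twisted} action $\sigma\star X=K_{\sigma^{-1}}^{\otimes k}\,\sigma(X)\,K_\sigma^{\otimes k}$; then $c_\sigma$ becomes a genuine $1$-cocycle for this action, and one must invoke the vanishing of $H^1$ for the unit group of the (twisted form of a) matrix algebra $C(D)$. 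This can be done, but it is not the routine Hilbert~90 for $\GL_N$ you cite, and the equation $\sigma(S)=c_\sigma^{-1}S$ together with your final chain $\sigma(P)=(Qc_\sigma K_{\sigma^{-1}}^{\otimes k})(c_\sigma^{-1}S)=PK_{\sigma^{-1}}^{\otimes k}$ does not hold as written: the middle expression equals $Qc_\sigma K_{\sigma^{-1}}^{\otimes k}c_\sigma^{-1}S$, and getting $QSK_{\sigma^{-1}}^{\otimes k}$ out of this requires precisely the twisted compatibility you have not imposed on $S$. So the descent argument needs to be reformulated before it goes through; the paper's explicit Vandermonde construction sidesteps all of this.
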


\begin{proof}
Let $\rho_0: G \to GL_k(F)$ be an irreducible representation with character $\chi$. 
Take the representations $\rho^i = \sigma_i(\rho_0): G \to \GL_k(\sigma_i(F))$ with character $\chi^{\sigma_i}$. We construct a new representation 
$$\tilde{\rho}: G \to \GL_{kn}(E): g \mapsto \begin{pmatrix}
\rho^1_g & 0 & \hdots & 0\\
0 & \rho^2_g & \hdots & 0 \\
\vdots & \vdots & \ddots & \vdots \\
0 & 0 & \hdots & \rho^n_g
\end{pmatrix} ,$$ which was already used in the formulation of the theorem. 

\smallskip

Since $F$ is a finite field extension of $\Q$, we know that $F = \Q(\theta)$ for some algebraic number $\theta$. Look at the matrix $$ Q = \begin{pmatrix}
\sigma_1(1) & \sigma_1(\theta) & \hdots & \sigma_1(\theta^{n-1}) \\
\sigma_2(1) & \sigma_2(\theta) & \hdots & \sigma_2(\theta^{n-1}) \\
\vdots & \vdots & \ddots & \vdots \\
\sigma_n(1) & \sigma_n(\theta) & \hdots & \sigma_n(\theta^{n-1})
\end{pmatrix} \otimes \I_k,$$ then it follows by construction that $\sigma(Q) = K^{\otimes k}_\sigma Q$ for every $\sigma \in \Gal(E,\Q)$. Note that $Q$ is the Kronecker product of a Vandermonde matrix with the identity and thus its determinant can easily be computed and is different from 0. So $Q$ is invertible and denote the inverse of $Q$ by $P$. By applying $\sigma \in \Gal(E,\Q)$ to the relation $Q P = \I_{kn}$, 
we find that $$\sigma(P) = \sigma(Q)^{-1} = Q^{-1} (K_\sigma^{\otimes k})^{-1} = P K^{\otimes k}_{\sigma^{-1}}$$ and thus the matrix $P$ satisfies condition $2$ of the theorem. 

\smallskip

First we show that $P \tilde{\rho}_g Q \in \GL_{nk}(\Q)$ for all $g \in G$. Equivalently, we have to show that $\sigma(P \tilde{\rho}_g Q) = P \tilde{\rho}_g Q$ for all $\sigma \in \Gal(E,\Q)$. We compute that 
\begin{eqnarray*}
\sigma(P \tilde{\rho}_g Q) = \sigma(P) \sigma(\tilde{\rho}_g) \sigma (Q) = P K_{\sigma^{-1}}^{\otimes k} \sigma(\tilde{\rho}_g) K_{\sigma} ^{\otimes k} Q.
\end{eqnarray*} 
It is easy to see that $\sigma(\tilde{\rho}_g)= K_{\sigma} ^{\otimes k} \tilde{\rho}_g K_{\sigma^{-1}}^{\otimes k}$ and thus the conclusion follows. So it suffices to show that the representations $P \tilde{\rho} Q$ and $\rho$ are equivalent over $\Q$. 

\smallskip

Note that $\tilde{\rho}$ and thus also $P \tilde{\rho}Q$ has character $\chi^{\sigma_1} + \ldots + \chi^{\sigma_n}$. So the representations $P \tilde{\rho} Q$ and $\rho$ have a common $\C$-irreducible factor, namely $\rho_0$, corresponding to the character $\chi$. Also, it follows from the discussion above about the characters of $\Q$-irreducible representations that both have the same dimension, namely $kn$. As a consequence of \cite[Corollary 9.7]{isaa76-1}, we have that $P \tilde{\rho}Q$ has a $\Q$-irreducible component which is equivalent with $\rho$ over $\Q$. Because they have the same dimension, this ends the proof of the theorem.
\end{proof}

It's easy to see that this theorem is in fact a generalization of the statement for abelian groups.
The advantage of working with a matrix $P$ that satisfies the conditions of the theorem is that we can easily construct matrices that have coefficients in $\Q$:
\begin{Prop}
\label{q}
Let $F \supseteq \Q$ be a field extension of degree $n$ and $\sigma_1, \ldots, \sigma_n$ the distinct monomorphisms from $F$ to $\C$. Let $C_0$ be any matrix with coefficients in $F$ and look at the matrix $$C = \left(\begin{array}{cccc} \sigma_1(C_0) & 0 & \cdots & 0 \\
0 & \sigma_2(C_0) & \cdots & 0 \\
\vdots & \vdots & \ddots & \vdots \\
0 & 0 & \cdots & \sigma_n(C_0)\end{array}\right).$$ Let $E$ be any field extension of $F$ which is Galois over $\Q$ and $P$ an invertible matrix with entries in $E$ which satisfies $$\sigma(P) = P K^{\otimes k}_{\sigma^{-1}}$$ for all $\sigma \in \Gal(E,\Q)$, where we use notations as above. Then $ P C P^{-1}$ is a matrix with coefficients in $\Q$.
\end{Prop}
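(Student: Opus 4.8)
The plan is to show that $PCP^{-1}$ is fixed by every $\sigma \in \Gal(E,\Q)$, which by Galois descent forces its entries to lie in $\Q$ (here $E/\Q$ is Galois, so $E^{\Gal(E,\Q)} = \Q$). Concretely, I would fix $\sigma \in \Gal(E,\Q)$ and compute $\sigma(PCP^{-1}) = \sigma(P)\,\sigma(C)\,\sigma(P)^{-1}$. Using the hypothesis $\sigma(P) = P K^{\otimes k}_{\sigma^{-1}}$, and hence $\sigma(P)^{-1} = (K^{\otimes k}_{\sigma^{-1}})^{-1} P^{-1} = K^{\otimes k}_{\sigma} P^{-1}$ (recall $K_\pi^{-1} = K_{\pi^{-1}}$ and the functoriality of the Kronecker product), this becomes
\[
\sigma(PCP^{-1}) = P\, K^{\otimes k}_{\sigma^{-1}}\, \sigma(C)\, K^{\otimes k}_{\sigma}\, P^{-1}.
\]
So the whole statement reduces to the single block-matrix identity $K^{\otimes k}_{\sigma^{-1}}\, \sigma(C)\, K^{\otimes k}_{\sigma} = C$, equivalently $\sigma(C) = K^{\otimes k}_{\sigma}\, C\, K^{\otimes k}_{\sigma^{-1}}$.

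The core step is therefore to verify this conjugation identity for the block-diagonal matrix $C = \mathrm{diag}(\sigma_1(C_0), \ldots, \sigma_n(C_0))$. Applying $\sigma$ entrywise gives $\sigma(C) = \mathrm{diag}(\sigma\sigma_1(C_0), \ldots, \sigma\sigma_n(C_0)) = \mathrm{diag}(\sigma_{\pi_\sigma(1)}(C_0), \ldots, \sigma_{\pi_\sigma(n)}(C_0))$ by the very definition of the permutation $\pi_\sigma$ (this uses that each $C_0$ entry lies in $F$, so $\sigma \circ \sigma_i$ as a map on those entries equals $\sigma_{\pi_\sigma(i)}$). On the other hand, conjugating a block-diagonal matrix by a (block) permutation matrix $K^{\otimes k}_\sigma$ permutes the diagonal blocks: one checks directly from the definitions in the excerpt — $K_\pi M$ permutes block-rows by $\pi$ and $M K_\pi$ permutes block-columns by $\pi^{-1}$ — that $K^{\otimes k}_\sigma\, \mathrm{diag}(D_1,\ldots,D_n)\, K^{\otimes k}_{\sigma^{-1}} = \mathrm{diag}(D_{\pi_\sigma(1)}, \ldots, D_{\pi_\sigma(n)})$. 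Taking $D_i = \sigma_i(C_0)$ matches $\sigma(C)$ exactly, which closes the argument.

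The only mildly delicate point — the ``main obstacle'' such as it is — is keeping the bookkeeping of the permutation straight: making sure the conjugation $K^{\otimes k}_\sigma (\cdot) K^{\otimes k}_{\sigma^{-1}}$ produces the block permuted by $\pi_\sigma$ and not by $\pi_\sigma^{-1}$, and matching this against $\sigma \circ \sigma_i = \sigma_{\pi_\sigma(i)}$. This is purely the same computation already carried out inside the proof of Theorem~\ref{sigmas} (there it appears as $\sigma(\tilde\rho_g) = K^{\otimes k}_\sigma \tilde\rho_g K^{\otimes k}_{\sigma^{-1}}$), so I would simply invoke or re-run that identity with $\tilde\rho_g$ replaced by $C$ — the fact that $C$ is not a group homomorphism is irrelevant, only its block-diagonal shape with blocks $\sigma_i(C_0)$ matters. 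Having established $\sigma(PCP^{-1}) = PCP^{-1}$ for all $\sigma \in \Gal(E,\Q)$, the conclusion $PCP^{-1} \in \GL_{kn}(\Q)$ follows from the fundamental theorem of Galois theory.
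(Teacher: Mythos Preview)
Your argument is correct and is exactly the intended one: the paper declares the proof ``immediate'' precisely because it is the same Galois-invariance computation already carried out in the proof of Theorem~\ref{sigmas}, with $\tilde\rho_g$ replaced by $C$. Your bookkeeping for the permutation identity $\sigma(C)=K^{\otimes k}_{\sigma} C K^{\otimes k}_{\sigma^{-1}}$ is accurate.
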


The proof of this proposition is immediate. Note that the matrix $C$ of the previous proposition doesn't depend on the choice of Galois extension $E$. Also the construction of the matrix $P$ in the proof of Theorem \ref{sigmas} didn't depend on the field $E$. It was only used to embed the monomorphisms $\sigma_i$ in a nice group structure. In the rest of this article we will ignore the use of the Galois extension $E$.

\subsection{Existence of real minimal splitting fields}

For a $\Q$-irreducible representation $\rho$ of a finite cyclic group, there is always a canonical choice of a minimal field extension $F \supseteq \Q$ such that the representation is completely reducible over $F$, i.e.\ is diagonalizable over $F$. If $f \in \Q[X]$ is the characteristic polynomial of a generator of the image of $\rho$, then the field $F$ is equal to the splitting field of $f$ over $\Q$. For non-abelian groups, we explained above that such a minimal splitting field $F$ also exists, but it is far from unique in general. In this subsection we show how to construct a real minimal splitting field in the case where $\rho$ is completely reducible over $\R$.

\medskip

An example of the non-uniqueness of the minimal splitting field can be given by the unique two-dimensional irreducible representation of the quaternion group $Q_8$. Take complex numbers $\alpha,\beta \in \C$ with $\alpha^2 + \beta^2 = -1$ and look at the representation $\rho$ given by 
 \begin{align*}
\rho(-1) & = \begin{pmatrix} -1 & 0 \\0 & -1 \end{pmatrix}\\
\rho(i) & = \begin{pmatrix} 0 & - 1 \\  1 & 0 \end{pmatrix}\\
\rho(j) &=   \begin{pmatrix} \alpha & \beta \\  \beta & -\alpha \end{pmatrix}.
 \end{align*}
It's an easy exercise to check that this indeed defines a representation of $Q_8$. This shows that every field of the form $\Q(\sqrt{-1-\alpha^2})$ with $\alpha \in \Q$ is a minimal splitting field for this representation. These fields do have in common that they are not real. In fact it's an exercise to show that every minimal splitting field for this character is totally imaginary.

\smallskip

In general it's not true that if a character $\chi$ is afforded by an $E$-representation for a field $E \supseteq \Q$, that there is also a subfield $F \subseteq E$ such that $F$ has minimal degree and $\chi$ is afforded by a $F$-representation. Therefore we cannot directly conclude that every real representation has a real minimal splitting field. If $m_\Q(\chi) = 1$, or said differently, if $\Q(\chi)$ is a splitting field for $\rho$, then this must of course be true. 
By the Brauer-Speiser Theorem, see for example \cite[page 171]{isaa76-1}, we know that $m_\Q(\chi) \leq 2$ and thus the only situation left to check is the one with $m_\Q(\chi) = 2$.

\smallskip

The following lemma characterizes the existence of a real minimal splitting field:

\begin{Lem}
\label{square}
Let $\chi$ be an irreducible real valued character of $G$ with $m_\Q(\chi)=2$. Let $K = \Q(\chi) \subseteq \R$ and $\rho: G \to \GL_{2n}(K)$ a representation with character $2 \chi$. Then the following statements are equivalent:
\begin{enumerate}[(1)]
\item There exists a minimal splitting field $F \subseteq \R$.
\item There exists a $G$-isomorphism $f: K^{2n} \to K^{2n}$ with $f^2 = \kappa \I_{K^{2n}}$, $\kappa > 0$ and $\sqrt{\kappa} \notin K$. 
\end{enumerate}
\end{Lem}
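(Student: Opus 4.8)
The plan is to exploit the structure theory of the endomorphism algebra $\mathrm{End}_G(K^{2n})$, which by Schur's lemma together with $m_\Q(\chi)=2$ is a quaternion division algebra $D$ over $K=\Q(\chi)$. A minimal splitting field for $\chi$ is precisely a field of degree $[F:K]=2$ that splits $D$, i.e.\ embeds into $D$ as a maximal subfield; such embedded quadratic subfields are exactly the fields $K(f)$ generated by an element $f \in D$ with $f \notin K$. So statement (1) says there is such an $f$ with $K(f) \subseteq \R$, and since $K \subseteq \R$, $K(f) \subseteq \R$ forces $f$ to satisfy a quadratic polynomial over $K$ with real roots; after completing the square we may replace $f$ by $f - \tfrac{\mathrm{tr}(f)}{2}\I$ to arrange $f^2 = \kappa \I$ with $\kappa \in K$, $\kappa > 0$ (positivity because the roots are real and nonzero, $f$ being invertible as an element of the division algebra), and $\sqrt{\kappa} \notin K$ because $f \notin K$. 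This gives (1) $\Rightarrow$ (2). For (2) $\Rightarrow$ (1), given $f$ as in (2), the subfield $K(f) \cong K[X]/(X^2 - \kappa)$ is a degree-$2$ extension of $K$ contained in $\R$ (as $\kappa > 0$), and since it embeds in the division algebra $D = \mathrm{End}_G(K^{2n})$ it is a splitting field for $\chi$ of degree $2n = m_\Q(\chi)[K:\Q]$ over $\Q$, hence minimal; thus $F = K(f) \subseteq \R$ works.

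Concretely I would carry this out as follows. First, fix notation: $D := \mathrm{End}_G(K^{2n})$, a central simple $K$-algebra by Schur, of $K$-dimension $m_\Q(\chi)^2 = 4$ (here one uses that $\rho$ has character $2\chi = m_\Q(\chi)\chi$, so $K^{2n}$ is $m_\Q(\chi)$ copies of the unique $K$-irreducible module with character $m_\Q(\chi)\chi$, and $\mathrm{End}_G$ of that irreducible is the division algebra $D$ whose Schur index is $m_\Q(\chi)=2$). Second, recall the standard fact (see \cite[Theorem 10.17]{isaa76-1} or any treatment of Schur indices) that a field $L$ with $K \subseteq L$ is a splitting field for $\chi$ iff $L \otimes_K D$ is a matrix algebra, and for $[L:K] = 2$ this happens iff $L$ embeds $K$-linearly into $D$. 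Third, translate "$F \subseteq \R$ a minimal splitting field" into "$K(f) \subseteq \R$ for some $f \in D \setminus K$": minimality forces $[F:\Q] = 2n$, hence $[F:K] = 2$, and any degree-$2$ splitting subfield is generated over $K$ by a single element of $D$. Fourth, do the completing-the-square normalization to pass between an arbitrary generator $f$ and one with $f^2 = \kappa\I$; here one checks $\kappa > 0$ using that the minimal polynomial $X^2 - \kappa$ of $f$ over $K$ must have its roots real (because $K(f)\subseteq\R$) and nonzero (because $f$ is a unit in the division algebra $D$), so $\kappa$ is a nonzero square in $\R$, i.e.\ positive; and $\sqrt{\kappa}\notin K$ precisely because $[K(f):K]=2$.

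The main obstacle, and the step that needs the most care, is the precise bookkeeping of the algebra $\mathrm{End}_G(K^{2n})$ and the passage between "degree-$2$ splitting field" and "quadratic subfield of $D$" — in particular making sure that $K^{2n}$ is exactly $m_\Q(\chi)$ copies of the $K$-irreducible of character $m_\Q(\chi)\chi$ so that $\mathrm{End}_G(K^{2n})$ is genuinely the division algebra $D$ and not a larger matrix algebra over it (this is where the hypothesis that $\rho$ has character exactly $2\chi$, i.e.\ multiplicity $m_\Q(\chi) = 2$, is used), and making sure that every degree-$2$ extension $L/K$ with $L \subseteq \R$ that splits $D$ actually arises from an embedded element $f$ rather than merely abstractly splitting $D$. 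Everything else — the completing-the-square step, the sign of $\kappa$, and $\sqrt{\kappa}\notin K$ — is routine once this algebraic framework is in place.
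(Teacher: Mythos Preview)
Your approach is correct and genuinely different from the paper's. The paper argues both directions concretely: for $(1)\Rightarrow(2)$ it writes $F=K(\sqrt d)$ with $d>0$, invokes the explicit block-diagonalization of Theorem~\ref{sigmas} to get a matrix $P$ with $P^{-1}\rho P=\rho_0\oplus\sigma(\rho_0)$, and then sets $f=P\,\mathrm{diag}(\sqrt d\,\I_n,-\sqrt d\,\I_n)\,P^{-1}$, which lands in $\GL_{2n}(K)$ by Proposition~\ref{q}; for $(2)\Rightarrow(1)$ it extends scalars to $F=K(\sqrt\kappa)$ and splits $F^{2n}$ into the two eigenspaces of $f$. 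Your argument instead identifies $\mathrm{End}_G(K^{2n})$ with the quaternion division algebra $D$ attached to $\chi$ and uses the standard equivalence (for central simple algebras of degree $2$) between ``$L/K$ quadratic splits $D$'' and ``$L$ embeds in $D$'', so that real minimal splitting fields correspond exactly to real quadratic subfields $K(f)\subset D$, and then a completing-the-square normalization finishes. Your route is more structural and avoids the explicit $P$ of Theorem~\ref{sigmas}, at the cost of importing a bit of central-simple-algebra machinery; the paper's route is more self-contained and reuses the machinery already built for the main theorem.

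One bookkeeping slip to fix: you write that ``$K^{2n}$ is $m_\Q(\chi)$ copies of the unique $K$-irreducible module with character $m_\Q(\chi)\chi$''. That cannot be right --- $m_\Q(\chi)$ copies would have character $m_\Q(\chi)^2\chi=4\chi$, and then $\mathrm{End}_G(K^{2n})$ would be $M_2(D)$, not $D$. What you need (and what is true) is that $\rho$, having character $2\chi$, is itself $K$-irreducible, so $K^{2n}$ is \emph{one} copy of the $K$-irreducible and $\mathrm{End}_G(K^{2n})=D$ is the division algebra of $K$-dimension $m_\Q(\chi)^2=4$. With that correction your argument goes through; the two facts you flag as delicate --- that $\mathrm{End}_G(K^{2n})$ is exactly $D$, and that every quadratic splitting field of a quaternion algebra embeds in it --- are both standard once the irreducibility is stated correctly.
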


\begin{proof}
First we show that $(1)$ implies $(2)$. Let $F$ be such a minimal splitting field, so $F = K(\sqrt{d})$ with $d \notin K^2$, $d>0$ and there exists an $F$-representation $\rho_0$ which affords the character $\chi$. Denote by $\sigma$ the nontrivial element of $\Gal(F,K)$. By using the same techniques as in the proof of Theorem \ref{sigmas}, we can show that there exists a matrix $P$ such that
$$P^{-1} \rho P = \begin{pmatrix} \rho_0 & 0 \\ 0 & \sigma(\rho_0) \end{pmatrix},$$ with $\sigma(P) = P K^{\otimes n}_\sigma$. Now consider the matrix 
$$C=\begin{pmatrix} \sqrt d \I_n & 0 \\ 0 & \sigma(\sqrt d) \I_n \end{pmatrix},$$ then it follows, just like in Proposition \ref{q}, that $P C P^{-1}$ has coefficients in $K$ and commutes with the representation $\rho$. It's easy to check that the linear map $f$ induced by this matrix satisfies all the wanted properties.

\smallskip

Next we prove that the existence of $f$ gives us a minimal splitting field. Take $F = K(\sqrt{\kappa})$ and $\sigma$ as before. Consider the representation $\rho$ as an $F$-representation by extending the scalars, in the same way the map $f$ is also an $G$-isomorphism over $F$. Since $f^2 = \kappa \I_{F^{2n}}$, we know that $f$ can only have two different eigenvalues, namely $\sqrt{\kappa}$ and $- \sqrt{\kappa}$. Each of those eigenvalues occurs, since if $v$ is an eigenvector for $\sqrt{\kappa}$, then $\sigma(v)$ is an eigenvector for $\sigma(\sqrt{\kappa}) = - \sqrt{\kappa}$ and vice versa. Now the splitting of $F^{2n}$ into the eigenspaces of $f$ gives us a decomposition of $F^{2n}$ into two $G$-invariant subspaces and thus $F$ is a splitting field. Because $F \subseteq \R$, this ends our proof.
\end{proof}

We use this lemma to prove the existence of a real minimal splitting field:

\begin{Thm}\label{nietsterk}
Let $\chi \in \Irr(G)$ be an irreducible character which is afforded by a real representation. Then there exists a real minimal splitting field for $\chi$. 
\end{Thm}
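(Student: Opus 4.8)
The goal is to show that an irreducible character $\chi$ afforded by a real representation admits a real minimal splitting field. By the Brauer--Speiser theorem $m_\Q(\chi)\le 2$, and if $m_\Q(\chi)=1$ then $\Q(\chi)\subseteq\R$ is itself a splitting field (since $\chi$ being afforded by a real representation means $\chi=\overline\chi$, so $\Q(\chi)\subseteq\R$), and we are done. So the only case to treat is $m_\Q(\chi)=2$, and there I plan to invoke Lemma~\ref{square}: it suffices to produce a $G$-isomorphism $f:K^{2n}\to K^{2n}$ (where $K=\Q(\chi)$) with $f^2=\kappa\I$ for some $\kappa>0$ with $\sqrt\kappa\notin K$.

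\textbf{Key steps.} First I would fix a $K$-representation $\rho:G\to\GL_{2n}(K)$ affording $2\chi$ (this exists since $m_\Q(\chi)=2$). By Schur's lemma, the $K$-algebra $D=\mathrm{End}_{KG}(K^{2n})$ of $G$-endomorphisms is a division algebra over $K$ whose class in the Brauer group is the one measured by the Schur index, hence a quaternion division algebra over $K$ (degree $m_\Q(\chi)=2$). Next I would use the hypothesis that $\chi$ is afforded by a \emph{real} representation: extending scalars from $K$ to $\R$ (more precisely, to the real closure or to any real embedding, but $K\subseteq\R$ so $\R\otimes_K D$ makes sense as $\R$ is a $K$-algebra via the given embedding), the algebra $\R\otimes_K D$ must be a matrix algebra over $\R$, i.e.\ $D$ is \emph{split} by $\R$, because $2\chi$ being realizable over $\R$ forces the Schur index over $\R$ to be $1$. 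A quaternion division algebra $D=\left(\frac{a,b}{K}\right)$ is split by $\R$ precisely when the corresponding quadratic form is indefinite at the real place, which in the standard presentation means we may choose a pure quaternion $f\in D$ with $f^2=\kappa$ for some totally positive-at-this-place element $\kappa\in K$, $\kappa>0$, and $\sqrt\kappa\notin K$ (if every such square were already in $K$, $D$ would be split over $K$, contradicting $m_\Q(\chi)=2$). Concretely: pick any nonzero pure quaternion $f_0$; then $f_0^2=\kappa_0\in K^\times$; if $\kappa_0>0$ we are essentially done after checking $\sqrt{\kappa_0}\notin K$; if $\kappa_0<0$ for every pure quaternion then $\R\otimes_K D\cong\G$ would be division, contradicting splitting over $\R$, so some pure quaternion has positive square. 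This $f$ is by construction a $G$-endomorphism of $K^{2n}$, and it is an isomorphism since $f^2=\kappa\I$ with $\kappa\ne 0$. Then Lemma~\ref{square} delivers a real minimal splitting field $F=K(\sqrt\kappa)$.

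\textbf{Main obstacle.} The delicate point is the clean translation between the representation-theoretic hypothesis (``$\chi$ afforded by a real representation'') and the algebra-theoretic statement (``the quaternion algebra $D=\mathrm{End}_{KG}(K^{2n})$ is split by $\R$ via the embedding $K\hookrightarrow\R$''). One must be careful that ``afforded by a real representation'' for the \emph{irreducible} character $\chi$ translates, after extending scalars along $K\subseteq\R$, into the $\R$-Schur index of $\chi$ being $1$, hence the local invariant of $D$ at the archimedean place of $K$ induced by our chosen embedding being trivial. Once that is pinned down, the existence of a pure quaternion with positive square (equivalently, the anisotropic norm form becoming isotropic over $\R$) is the standard structure theory of quaternion algebras, and verifying $\sqrt\kappa\notin K$ is automatic from $D$ being non-split over $K$. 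An alternative, more hands-on route that avoids Brauer-group language: work directly with a minimal splitting field $F=K(\sqrt d)$ (which exists, though possibly imaginary), run the argument in the proof of Theorem~\ref{sigmas}/Lemma~\ref{square} to get a $G$-endomorphism with square $d\,\I$, and then use that $2\chi$ is realizable over $\R$ to modify the $G$-endomorphism (multiplying by suitable elements of the division algebra $D$, whose real-split structure provides an element of positive square) so that the resulting square $\kappa$ is positive; I would present whichever of these two the authors' later lemmas make shortest, but I expect the quaternion-algebra splitting argument to be the cleanest.
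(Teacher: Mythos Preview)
Your proposal is correct and reaches the same endpoint as the paper (Lemma~\ref{square}), but by a genuinely different route. Both arguments reduce via Brauer--Speiser to the case $m_\Q(\chi)=2$ and then seek a $G$-endomorphism $f$ of the $K$-module affording $2\chi$ with $f^2=\kappa\I$, $\kappa>0$, $\sqrt\kappa\notin K$. The paper constructs $f$ explicitly: it fixes a (possibly imaginary) minimal splitting field $F=K(\sqrt d)$, takes the $G$-invariant bilinear form $B$ coming from an isomorphism $V\cong V^\ast$, shows $B$ is symmetric by comparing with the $\C$-picture (Serre's Theorem~31), pairs $B$ against a $G$-invariant positive-definite hermitian form to obtain an antilinear $G$-map $\psi$, and checks $\psi^2=\mu\I$ with $\mu\in K$, $\mu>0$. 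You instead recognise $D=\mathrm{End}_{KG}(K^{2n})$ as a quaternion division algebra over $K$, translate the hypothesis $m_\R(\chi)=1$ into $D\otimes_K\R\cong M_2(\R)$, and deduce that some pure quaternion has positive square (concretely: in a presentation $D=(a,b/K)$, splitting over $\R$ forces one of $a,b$ to be positive, and then $i$ or $j$ works; non-squareness follows since $D$ is division over $K$). Your approach is cleaner and more conceptual for a reader comfortable with quaternion algebras and the Brauer group, and it makes transparent \emph{why} the real hypothesis produces a positive $\kappa$; the paper's approach is more self-contained, requiring only bilinear-form manipulations and the single input from Serre that $\nu_2(\chi)=1$ yields a symmetric invariant form over $\C$.
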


\begin{proof} We use some ideas of the proof of \cite[Theorem 31]{serr77-1}, but in our case, we work over a finite field extension of $\Q$ instead of over $\C$.
Let $K = \Q(\chi)$, then we know that $K$ is a real field extension of $\Q$. As mentioned above, we only have to check the case where $m_\Q(\chi)=2$ because of the Brauer-Speiser Theorem. So there exists a field $F \supseteq K$ such that $\chi$ is afforded by a $F$-representation $\rho$ and $[F:K]=2$. If $F \subseteq \R$, there is nothing to prove, so we can assume that $F = K(\theta)$ with $\theta = \sqrt{d}, \hspace{1mm} d \in K$ and $d <0$.

\smallskip

Recall that $\rho$ also induces a representation on the dual vector space $V^\ast$, namely $\rho^\ast: G \to GL(V^\ast)$ with $$\rho^\ast_g(\varphi) = \varphi \circ (\rho_g)^{-1} = \varphi \circ \rho_{g^{-1}} \hspace{5mm} \forall \varphi \in V^\ast.$$ Since the values of $\chi$ lie in $\R$, the character of $\rho^\ast$ is equal to $\chi$. So the representations $\rho$ and $\rho^\ast$ are equivalent and there exists a $G$-isomorphism $f: V \to V^\ast$. 
Now look at the map $$B: V \times V \to F: (v,w) \mapsto f(v)(w).$$ It is easy to see that $B$ is a nondegenerate bilinear form on $V$. Since $f$ is a $G$-isomorphism, $B$ is clearly $G$-invariant, i.e.\ $B(\rho_g(v),\rho_g(w)) = B(v,w)$ for all $g \in G, \hspace{1mm} v,w \in V$. This construction gives us an isomorphism between the space of $G$-morphisms from $V$ to $V^\ast$ and the space of $G$-invariant bilinear forms on $V$. Since $V$ and $V^\ast$ are irreducible, the space of $G$-morphisms between them has dimension $1$ . Hence, this $G$-invariant isomorphism $f$ is unique up to scalar multiplication.

\smallskip

We can also consider the spaces $V_\C=\C \otimes_{F} V$ and $(V_\C)^\ast=(V^\ast)_\C= \C \otimes_{F} V^\ast $ and the induced representations $\rho_\C:G\to \GL(V_\C)$ and 
$\rho^\ast_\C:G\to \GL(V^\ast_\C)$.  Then $f$ extends to a $G$-isomorphism $f_\C: V_\C \to V^\ast_\C$ and the bilinear form $B$ extends to $B_\C:V_\C\times V_\C\to \C: (v,w)\mapsto f_\C(v)(w)$.
Since $\chi$ is afforded by a $\R$-representation, there exists also a nondegenerate symmetric bilinear form over $\C$ which is invariant under $G$ by \cite[Theorem 31]{serr77-1}. Because of the uniqueness of nondegenerate $G$-invariant bilinear forms up to scalar multiplication, it now follows that $B_\C$, and hence also $B$, must be symmetric.

\smallskip

Now choose a $G$-invariant, positive definite, hermitian scalar product 
\[ \left\langle \ ,\ \right \rangle:V \times V \rightarrow F.\]
For every $v \in V$, there exists a unique $\psi(v)\in V$ such that $B(w,v) = \left\langle w,\psi(v)\right\rangle$ for all $w \in V$. An easy computation then shows that $\psi$ is bijective and antilinear, i.e.\ $\psi(\lambda v) = \overline{\lambda} \psi(v)$ for all $\lambda \in F$. Also, $\psi$ is $G$-invariant because both $B$ and $\langle \hspace{1mm},\hspace{1mm} \rangle$ are $G$-invariant. So $\psi^2$ is a $G$-automorphism of $V$ and thus $\psi^2 = \mu \I_V$ for some $\mu \in F$. By looking at $V$ as a vector space over $K$, the map $\psi$ becomes linear (since $K \subseteq \R$). Therefore, it is sufficient to show that $\mu \in K, \mu > 0$ and $\sqrt{\mu} \notin K$ because of Lemma \ref{square}.

\smallskip

For any $v,w\in V$ we have that 
\[ \langle v, \psi(w)\rangle = B(v,w) =B(w,v)= \langle w, \psi(v)\rangle =
\overline{\langle \psi(v), w\rangle}.\]
Using this identity, we find for all $v,w \in V$ that 
\[
\overline{\mu} \langle v, w \rangle = \langle v, \mu w\rangle= \langle v, \psi^2(w) \rangle =
\overline{\langle\psi(v),\psi(w)\rangle}= \langle\psi^2(v) , w \rangle=\mu\langle v,w\rangle
\]
and thus $\mu \in \R \cap F = K$. From the same computation with $v=w$, we also have that $\mu \langle v,v\rangle = \langle \psi(v), \psi(v) \rangle$ and $\mu >0$ because $\langle  \ ,\ \rangle$ is positive definite. It's easy to check that $\sqrt{\mu} \notin K$ because $\chi$ cannot be afforded by a $K$-representation. This ends the proof.
\end{proof}

\subsection{Existence of c-hyperbolic units in number fields}

In the previous parts we discussed the minimal splitting field for a representation. It is now important to know if we can find units in these field which are sufficiently hyperbolic.

\smallskip

Let $K = \Q(\theta)$ be any number field of degree $n$ with monomorphisms $\sigma_i: K \to \C$ for $i \in \{1, \ldots, n\}$. Then we denote by $\mathcal{O}_K \subseteq K$ the subring of algebraic integers and $U_K$ the group of units in $\mathcal{O}_K$. Note that the elements of $U_K$ are exactly those $\mu \in K$ which have a minimal polynomial over $\Q$ with integer coefficients and unit constant term. We are interested in so-called $c$-hyperbolic elements of $U_K$:

\begin{Def}
An algebraic unit $\mu \in U_K$ is called $c$-hyperbolic if $$\forall k \in \{1, \ldots, c\}, \hspace{1mm} \forall i_1, i_2, \ldots, i_k \in \{1,\ldots,n\}: \vert \sigma_{i_1} (\mu) \ldots \sigma_{i_k}(\mu) \vert \ne 1.$$
\end{Def}

The notion of a $c$-hyperbolic unit is the translation of $c$-hyperbolic integer-like matrices to number fields. By definition an element of $K$ is $c$-hyperbolic if and only if the companion matrix of its minimal polynomial over $\Q$ is $c$-hyperbolic and integer-like. For any $x \in U_K$, we have that the product of all $\sigma_i(x)$ is up to sign equal to the constant term of the minimal polynomial of $x$ over $\Q$ and therefore $U_K$ cannot have $n$-hyperbolic elements. In the case where $K$ is totally imaginary one can conclude in the same way that there are no units which are $\frac{n}{2}$-hyperbolic. 

\smallskip
The structure of the group $U_K$ is well known because of Dirichlet's Units Theorem. We will use the techniques of this theorem to investigate the existence of $c$-hyperbolic units in different number fields. Just as before, we distinguish between two possible cases, depending on wether or not our number field is totally imaginary.

\begin{Prop}\label{hyper}
Let $K$ be a number field of degree $n$ which is not totally imaginary. Then there exists a $c$-hyperbolic $\mu \in U_K$ for all $c \leq n-1$.
\end{Prop}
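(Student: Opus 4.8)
\emph{Proof proposal.} The plan is to translate $c$-hyperbolicity of a unit into an avoidance condition for a finite family of rational hyperplanes, and then to invoke Dirichlet's unit theorem to find a unit that avoids all of them. Write $n=s+2t$ with $s\geq 1$ the number of real monomorphisms of $K$ (here we use that $K$ is not totally imaginary) and $2t$ the number of complex ones, and consider the logarithmic map
\[
L\colon U_K\to\R^n,\qquad L(\mu)=\bigl(\log|\sigma_1(\mu)|,\dots,\log|\sigma_n(\mu)|\bigr).
\]
Since $|\sigma_i(\mu)|=|\sigma_j(\mu)|$ for a complex-conjugate pair $\sigma_i,\sigma_j$, and $\prod_i\sigma_i(\mu)=\pm1$ for a unit, the image $\Lambda=L(U_K)$ lies in the subspace
\[
W=\Bigl\{x\in\R^n : x_i=x_j\text{ for each conjugate pair, and }\textstyle\sum_i x_i=0\Bigr\},
\]
which has dimension $s+t-1$; by Dirichlet's unit theorem $\Lambda$ is a lattice of full rank $s+t-1$ in $W$. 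Now, for $a=L(\mu)$ and $m=(m_1,\dots,m_n)\in\Z_{\geq0}^n$ with $\sum_i m_i=k$, taking logarithms turns $|\sigma_{i_1}(\mu)\cdots\sigma_{i_k}(\mu)|=1$ into $\sum_i m_i a_i=0$; so $\mu$ is $c$-hyperbolic if and only if $a$ avoids the finite union of the (rational, origin-containing) hyperplanes $H_m=\{x:\sum_i m_i x_i=0\}$, as $m$ runs over the finite set of nonzero vectors in $\Z_{\geq0}^n$ with $\sum_i m_i\leq c$.

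The crux is to show that no $H_m$ in this family contains all of $W$. A functional $x\mapsto\sum_i m_ix_i$, restricted to $W$, depends only on the vector $(c_j)_{j=1}^{s+t}$ obtained from $m$ by keeping the coefficients at real places and replacing the two coefficients at each complex pair by their sum; and (since on the $(s+t)$-dimensional space carrying these coordinates, $W$ is the kernel of the functional with entry $1$ at each real place and $2$ at each complex place) $\sum_i m_ix_i$ vanishes identically on $W$ exactly when $(c_j)$ is proportional to that vector. Because $s\geq1$, the proportionality constant equals some $m_j\in\Z_{\geq0}$; it is nonzero (otherwise $m=0$), hence $\geq1$, and then $\sum_i m_i=(\text{that constant})\cdot(s+2t)\geq n>c$, contradicting $\sum_i m_i\leq c$. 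Thus $W\not\subseteq H_m$ for every admissible $m$, so each $H_m\cap W$ is a proper subspace of $W$.

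Finally I would produce the desired unit. The $\Q$-span $\Lambda_\Q$ of the full-rank lattice $\Lambda$ is a $\Q$-form of $W$ (a $\Z$-basis of $\Lambda$ is an $\R$-basis of $W$), and for each admissible $m$ the intersection $H_m\cap\Lambda_\Q$ is a proper $\Q$-subspace of $\Lambda_\Q$, since otherwise its real span, namely $W$, would be contained in $H_m$. A $\Q$-vector space is not a finite union of proper subspaces, so there is $v\in\Lambda_\Q$ lying in none of the $H_m$; clearing denominators gives $v'\in\Lambda$ with the same property, and any $\mu\in U_K$ with $L(\mu)=v'$ is then $c$-hyperbolic. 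The main obstacle is exactly the verification in the preceding paragraph: ruling out that some short nonnegative relation $\sum_i m_ix_i=0$ holds identically on $W$. This is where both hypotheses, $c\leq n-1$ and $K$ not totally imaginary, enter, and — as the totally imaginary case (where a product of $t=n/2$ absolute values of conjugates of any unit is forced to be $1$) shows — the bound $c\le n-1$ is sharp for this method.
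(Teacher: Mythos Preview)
Your argument is correct and follows essentially the same route as the paper's: the logarithmic embedding, Dirichlet's unit theorem, and the reduction of $c$-hyperbolicity to avoiding a finite union of proper (rational) hyperplanes inside the subspace carrying the unit lattice. The only differences are cosmetic---you work in $\R^n$ and then collapse to the $(s+t)$ coordinates, whereas the paper starts directly in $\R^{s+t}$; and your final step via the $\Q$-span of the lattice is a slightly more explicit version of the paper's one-line appeal to the fact that a full-rank lattice cannot lie in a finite union of proper subspaces.
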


The proof is analogous to that in \cite{dv09-1} in the case of a totally real Galois extensions over $\Q$. 

\begin{proof} It is of course sufficient to prove this for $c = n-1$. Write $n = s + 2t$ as before with $s \neq 0$ by the conditions of the theorem. Assume that the first $s$ monomorphisms $\sigma_1, \ldots, \sigma_s$ are the real ones. Look at the map 
\begin{eqnarray*}
l: U_K &\to& \R^{s+t}: \\
\mu &\mapsto& \left( \log\vert \sigma_1(\mu)\vert, \ldots, \log\vert \sigma_s(\mu)\vert, \log \vert \sigma_{s+1}(\mu)\vert, \ldots, \log\vert \sigma_{s+t}(\mu) \vert \right),
\end{eqnarray*}
which is also used in the proof of Dirichlet's Units Theorem (see \cite{st87-1}). Now $l(U_K)$ is a lattice of dimension $s+t-1$ in $\R^{s+t}$, which spans the vector space $$V = \{ (x_1, \ldots, x_{s+t}) \in \R^{s+t} \mid \sum_{i=1}^s x_i +  \sum_{j = s+1}^{t} 2 x_j = 0 \}.$$ We claim that there exists an element $x \in l(U_K)$ for which
$$  \forall k \in \{1, \ldots, c\}, \hspace{1mm} \forall i_1, \ldots, i_k \in \{1, \ldots, s+t \}:  \hspace{2mm} \sum_{j=1}^k x_{i_j} \neq 0.$$
Any element $\mu \in U_K$ with $l(\mu) = x$ is then obviously a $c$-hyperbolic element. 

\smallskip

It's easy to see that each of the equations $\displaystyle \sum_{j=1}^k x_{i_j} = 0$ is linearly independent of the equation determining $V$. For this to be true, we indeed use the assumption that $c \leq n-1 = s + 2t - 1$ and $s > 0$. So the set of solutions of $\displaystyle \sum_{j=1}^k x_{i_j} = 0$ in $V$ determines a subspace of $V$ of dimension $s+t-2$. 
It is not so difficult to see that a cocompact lattice of a vector space can never be contained in a finite union of proper subspaces. So this means that we can always find an $x$ as described above.
\end{proof}

\begin{Prop}\label{hyper2}
Let $K$ be a number field of degree $n$ which is totally imaginary. Then there exists a $c$-hyperbolic $\mu \in U_K$ for all $c \leq \frac{n}{2}-1$.
\end{Prop}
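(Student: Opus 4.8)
The plan is to mimic the Dirichlet-unit argument of Proposition~\ref{hyper}, but to account for the fact that now \emph{all} monomorphisms come in conjugate pairs, which is exactly why the best we can hope for is $c \le \frac{n}{2}-1$. Write $n = 2t$, so the $n$ monomorphisms of $K$ into $\C$ are $\sigma_1, \overline{\sigma_1}, \ldots, \sigma_t, \overline{\sigma_t}$, and $|\sigma_j(\mu)| = |\overline{\sigma_j}(\mu)|$ for every $\mu$. It suffices to treat $c = \frac{n}{2}-1 = t-1$. Consider the logarithmic embedding
\[
l: U_K \to \R^{t}: \mu \mapsto \left(\log|\sigma_1(\mu)|, \ldots, \log|\sigma_t(\mu)|\right),
\]
which is (half of) the usual Dirichlet map; by Dirichlet's Units Theorem $l(U_K)$ is a lattice of full rank $t-1$ in the hyperplane $V = \{ (x_1,\ldots,x_t) \in \R^t \mid \sum_{i=1}^t x_i = 0 \}$ (the kernel of $l$ being the roots of unity, a finite group). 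The key point is that a product $\sigma_{i_1}(\mu)\cdots\sigma_{i_k}(\mu)$ over any $k \le t-1$ indices $i_j \in \{1,\ldots,t\}$ together with their allowed conjugates has absolute value $\prod_j |\sigma_{i_j}(\mu)|$, which is $\ne 1$ precisely when the corresponding sum $\sum_{j=1}^{k} x_{i_j} \ne 0$, where $x = l(\mu)$; and a mixed product involving both $\sigma_j$ and $\overline{\sigma_j}$ just corresponds to reusing the index $j$, still a sum of at most $k \le t-1$ of the coordinates $x_i$ (with repetition). So it is enough to find $x \in l(U_K)$ avoiding all the finitely many hyperplanes $\sum_{j=1}^{k} x_{i_j} = 0$ with $1 \le k \le t-1$.

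Next I would check that each such equation cuts out a \emph{proper} subspace of $V$, i.e.\ is not implied by the defining equation $\sum_{i=1}^t x_i = 0$ of $V$. This is where the hypothesis $k \le t-1$ is used: a relation $\sum_{j=1}^{k} x_{i_j} = 0$ involving strictly fewer than $t$ of the coordinates (after collecting repetitions, at most $t-1$ distinct coordinates actually appear) cannot be a scalar multiple of $x_1 + \cdots + x_t = 0$, since the latter has all $t$ coordinates with nonzero coefficient. Hence each forbidden locus is a subspace of $V$ of dimension $t-2 < t-1 = \dim V$. Since the cocompact lattice $l(U_K)$ in $V$ cannot be contained in a finite union of proper subspaces (the same elementary fact invoked in Proposition~\ref{hyper}), there exists $x \in l(U_K)$ outside all of them; any $\mu \in U_K$ with $l(\mu) = x$ is then a $c$-hyperbolic unit.

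The only genuinely delicate point — and the one I would spell out carefully — is the bookkeeping in the first paragraph: verifying that an arbitrary product $\sigma_{i_1}(\mu)\cdots\sigma_{i_k}(\mu)$ with the $i_j$ ranging over all $n$ monomorphisms (not just the chosen representatives) still translates into a sum of at most $k \le t-1$ of the $t$ coordinates of $x$, counted with multiplicity. Because $|\sigma_j(\mu)| = |\overline{\sigma_j}(\mu)|$, replacing each monomorphism by its "positive" representative does not change the absolute value of the product, so the condition $|\sigma_{i_1}(\mu)\cdots\sigma_{i_k}(\mu)| \ne 1$ is equivalent to $\sum_{j=1}^k x_{i_j'} \ne 0$ for suitable $i_j' \in \{1,\ldots,t\}$; and since at most $t-1$ distinct indices can occur among $k \le t-1$ of them, such a sum genuinely omits at least one coordinate and the independence claim of the second paragraph applies. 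Everything else is routine Minkowski-theory of lattices, exactly as in the proof of Proposition~\ref{hyper}.
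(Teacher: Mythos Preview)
Your proposal is correct and follows exactly the approach the paper intends: the paper itself does not write out a proof of Proposition~\ref{hyper2}, stating only that ``the proof is completely the same as in the previous case and thus is left for the reader.'' Your adaptation of the Dirichlet argument from Proposition~\ref{hyper} to the totally imaginary situation (with $s=0$, $n=2t$, and the logarithmic image a rank $t-1$ lattice in the hyperplane $\sum x_i=0$) is precisely that omitted argument, and your handling of the bookkeeping---that any product of $k\le t-1$ monomorphisms reduces, in absolute value, to a sum of at most $t-1$ of the $t$ coordinates and hence gives a hyperplane independent of the defining one---is the right verification.
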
 

The proof is completely the same as in the previous case and thus is left for the reader. As stated above, the bounds given in the propositions are optimal.

\begin{Rmk}
From the proof of Proposition~\ref{hyper} it follows that if $K$ is a number field of degree $n$ which is not totally imaginary, then 
we can always find a $c$-hyperbolic algebraic unit $\mu$ (for any $c< n$) such that $\vert \mu \vert > 1$, 
but all other conjugates have absolute value strictly smaller than 1. So we can assume that $\mu$ is a so called Pisot number.
\end{Rmk}

\subsection{Existence of Galois extensions}

In the previous part we investigated the existence of $c$-hyperbolic units in number fields. In the proof of the main theorem, it will be necessary to take fields extensions to ensure the existence of such units for the given $c$. The following theorem shows us that we can always find a Galois extension for any given degree:

\begin{Thm} \label{ext}
Let $\Q \subseteq F$ be a finite degree field extension and $m \in \N_0$ a natural number. Then there exists a Galois extension $F \subseteq E$ such that $[E:F] = m$. If $F$ is real, then we can find such an $E$ which is not totally imaginary.
\end{Thm}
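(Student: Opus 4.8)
The plan is to reduce the statement to finding a \emph{cyclic} extension $L/\Q$ of degree $m$ with $L\cap F=\Q$: once we have such an $L$, the compositum $E:=FL$ is Galois over $F$ (because $L/\Q$ is Galois), and $[E:F]=[L:L\cap F]=[L:\Q]=m$. The case $m=1$ is trivial (take $E=F$), so assume $m\ge 2$. To build $L$ I would stay inside cyclotomic fields: for a prime $q$ with $m\mid q-1$, the group $\Gal(\Q(\zeta_q)/\Q)\cong(\Z/q\Z)^{\times}$ is cyclic of order $q-1$, so it has a unique subgroup of index $m$, whose fixed field $L_q\subseteq\Q(\zeta_q)$ is the unique subfield with $[L_q:\Q]=m$; moreover $L_q/\Q$ is cyclic, hence Galois.

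Next I would choose the prime $q$ by Dirichlet's theorem on primes in arithmetic progressions: pick $q$ with $q\equiv 1\pmod m$ (so that $L_q$ exists) and with $q\nmid \mathrm{disc}(F)$; since only finitely many primes divide $\mathrm{disc}(F)$, infinitely many such $q$ remain. Put $E:=FL_q$. The one genuinely non-formal step is to verify that $L_q\cap F=\Q$. For this I would use ramification: $q$ is totally ramified in $\Q(\zeta_q)$, hence also ramified in every intermediate field $\Q\subsetneq M\subseteq\Q(\zeta_q)$, so that $q\mid\mathrm{disc}(M)$ for every such $M$. Applying this to $M=L_q\cap F$: on the one hand $M\subseteq L_q\subseteq\Q(\zeta_q)$, so $q\mid\mathrm{disc}(M)$ unless $M=\Q$; on the other hand $M\subseteq F$ and $q$ is unramified in $F$, so $q$ is unramified in $M$, i.e.\ $q\nmid\mathrm{disc}(M)$. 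Hence $M=\Q$, and therefore $E/F$ is Galois with $[E:F]=[L_q:\Q]=m$.

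Finally, for the refinement when $F$ is real (i.e.\ $F\subseteq\R$), I would sharpen the choice of $q$ to $q\equiv 1\pmod{2m}$, which is again permitted by Dirichlet. Then $(q-1)/m$ is even, so the unique subgroup of $(\Z/q\Z)^{\times}$ of order $(q-1)/m$ — the one fixing $L_q$ — contains the unique element of order $2$ of this cyclic group, namely complex conjugation. Thus $L_q$ is fixed by complex conjugation, i.e.\ $L_q\subseteq\R$, and hence $E=FL_q\subseteq\R$; in particular $E$ admits a real embedding and is not totally imaginary. I expect the ramification argument of the second paragraph to be the heart of the matter; the remainder is bookkeeping with Dirichlet's theorem and the Galois correspondence for cyclic groups.
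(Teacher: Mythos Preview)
Your argument is correct and takes a genuinely different route from the paper. The paper does not try to hit degree $m$ on the nose: it quotes an external result (\cite[Theorem~2.3]{dv09-1}) to produce a totally real abelian extension $K/\Q$ of degree $mn$ with $n=[F:\Q]$, forms $E=KF$, observes that $\Gal(E/F)$ embeds into the abelian group $\Gal(K/\Q)$ and that $m\mid [E:F]$ (since $[E:F]=[K:K\cap F]=mn/[K\cap F:\Q]$ and $[K\cap F:\Q]\mid n$), and then cuts down to an intermediate field $E_0$ with $[E_0:F]=m$ via the Galois correspondence for abelian groups; realness comes for free because $K$ is totally real. Your approach instead manufactures a cyclic $L_q\subseteq\Q(\zeta_q)$ of degree exactly $m$ and uses the ramification trick at $q$ to force $L_q\cap F=\Q$, so the compositum already has the right degree. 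What you gain is self-containment---everything reduces to Dirichlet and standard ramification in cyclotomic fields, with no appeal to an outside existence theorem---and a sharper conclusion in the real case (your $E$ is actually a subfield of $\R$, not merely ``not totally imaginary''). What the paper's approach buys is that it sidesteps the linear-disjointness issue entirely: no discriminant or ramification bookkeeping is needed, at the cost of overshooting the degree and invoking a black box.
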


\begin{proof}
Let $n$ be the degree of the field extension $\Q \subseteq F$. From \cite[Theorem 2.3]{dv09-1} we know that there exists a totally real field extension $\Q \subseteq K$ for every given degree such that $K$ is Galois over $\Q$ and $\Gal(K,\Q)$ is abelian. In fact, the theorem doesn't say anything about the Galois group, but it follows from the proof that we can always assume this.

Let $K$ be such a field extension with $[K:\Q] = mn$. Look at the field $E = KF$, the smallest field that contains both $K$ and $F$. Since $K$ is a splitting field over $\Q$ for a polynomial $f \in \Q[X]$, $E$ is also a splitting field for the same polynomial $f$ over the field $F$. Thus $E$ is a Galois extension of $F$ and $m \mid [E:F]$.

\smallskip
We now claim that $\Gal(E,F)$ is abelian. 
Look at the homomorphism $$\pi: \Gal(E,F) \to \Gal(K,\Q): \sigma \mapsto \sigma|_K,$$
then it is easy to see that $\pi$ is injective. Since $\Gal(K,\Q)$ is abelian, therefore also $\Gal(E,F)$ must be abelian. 

\smallskip

Since $\Gal(E,F)$ is a finite abelian group and $m \mid \vert \Gal(E,F) \vert$, we can always find a (normal) subgroup $H \le \Gal(E,F)$ of index $m$. By the fundamental theorem of Galois theory, there exists a subfield $E_0$ of $E$, Galois over $F$, such that $[E_0:F] = m$ and thus the field $E$ satisfies the conditions of the theorem. If $F$ is real, then since $K$ is also real, we have that $E$ and so also $E_0$ is realized as a subfield of $\R$.
\end{proof}

If $F$ cannot be embedded in $\R$, then of course any field extension cannot be imbedded in $\R$. So the condition in this theorem is necessary. 

\subsection{Proof of the other direction of the main theorem}
For constructing matrices that commute with a representation, the following trivial observation is useful:
\begin{Lem}\label{com}
Let $M,C_1,\ldots,C_m$ be $k\times k$ matrices over any field $F$ and assume that $C_i$ commutes with $M$ for all $i$. Then the $km\times km$ matrices
\begin{align*}
A = \begin{pmatrix}
0 & 0 & \ldots & 0 & C_1 \\
\I_{k} & 0 & \ldots & 0 & C_2 \\
0 & \I_{k} & \ldots & 0 & C_3 \\
\vdots & \vdots & \ddots & \vdots & \vdots \\
0 & 0 & \ldots & \I_{k} & C_{m}
\end{pmatrix}, \hspace{2 mm} B = \begin{pmatrix} M & 0 & \hdots & 0\\ 0 & M & \hdots & 0\\ \vdots & \vdots & \ddots & \vdots \\ 0 & 0 & \hdots & M\end{pmatrix} 
\end{align*}
commute.
\end{Lem}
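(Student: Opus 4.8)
The plan is to prove Lemma~\ref{com} by a direct block computation showing $AB = BA$. Write both matrices in $k\times k$ blocks, $A = (A_{ij})_{1\le i,j\le m}$ and $B = (B_{ij})_{1\le i,j\le m}$. Since $B$ is block-diagonal with $B_{ii} = M$ for every $i$ and $B_{ij}=0$ for $i\neq j$, the block-matrix product rule gives that the $(i,j)$ block of $AB$ is $A_{ij}M$ and the $(i,j)$ block of $BA$ is $MA_{ij}$. Hence the whole statement reduces to checking that $A_{ij}M = MA_{ij}$ for every pair $(i,j)$.

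Next I would simply read off the blocks of $A$ from its displayed form: $A_{i,i-1} = \I_k$ for $i = 2,\ldots,m$, the last column is $A_{i,m} = C_i$ for $i = 1,\ldots,m$, and every other block is the zero matrix. (The only overlap is harmless: when $m$ could make $i-1 = m$, i.e.\ never for $i\le m$, so the subdiagonal and last-column patterns do not interfere.) Each of the three possible block entries — $0$, $\I_k$, and $C_i$ — commutes with $M$: the first two trivially, and $C_i M = M C_i$ by hypothesis. Therefore $A_{ij}M = MA_{ij}$ for all $i,j$, which is exactly what was needed, so $A$ and $B$ commute.

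There is essentially no obstacle here — the paper itself calls it a trivial observation — and the only thing requiring a little care is the bookkeeping of which blocks of $A$ are nonzero, so that the verification $A_{ij}M = MA_{ij}$ genuinely covers every entry. If one prefers an index-free argument, one can instead write $A = N\otimes \I_k + \sum_{i=1}^{m} (E_{im}\otimes C_i)$, where $N$ is the $m\times m$ subdiagonal shift matrix and $E_{im}$ denotes the $m\times m$ matrix unit with a single $1$ in position $(i,m)$, and $B = \I_m \otimes M$; then $AB = N\otimes M + \sum_i E_{im}\otimes C_iM$ and $BA = N\otimes M + \sum_i E_{im}\otimes MC_i$, and these agree precisely because each $C_i$ commutes with $M$. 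Either formulation is immediate; I would present the block version as the cleaner of the two.
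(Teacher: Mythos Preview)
Your proof is correct. The paper does not actually supply a proof of this lemma at all---it introduces it as a ``trivial observation'' and moves on---so your direct block computation (and the optional tensor-product reformulation) is precisely the natural verification, and it is complete as written.
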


We are now ready to prove the main result of this paper.

\begin{proof}[Proof of Theorem~\ref{vert}]

At the end of section~\ref{firstdirection}, we already showed how to prove the first direction of this theorem.

\medskip

Now let $\rho$ be a $\Q$-irreducible representation that splits in more than $\frac{c}{m}$ components over $\R$. We construct a $c$-hyperbolic, integer-like matrix $C \in GL(\Q)$ which commutes with $m \rho = \rho + \ldots + \rho$. 
Let $\chi$ be the character of an irreducible component of $\rho$. Take a field extension $\Q \subseteq F$ of minimal degree $n$ such that $\chi$ is afforded by a $F$-representation $\rho_0: G \to \GL_k(F)$. Note that $n$ is also the number of components of $\rho$ over $\C$. We first look for a $c$-hyperbolic unit $\mu$ in some field extension of $F$. Let $E$ be a field extension like in Theorem~\ref{ext} of degree $m$. 

\smallskip

First assume that $\chi$ is not afforded by a real representation, so $F$, and hence also $E$, is totally imaginary. Since $n$ is also the number of components of $\rho$ over $\C$, we find that $n > 2 \frac{c}{m}$ and thus $[E:\Q]> 2c$ ($\Rightarrow [E:\Q]\geq 2c +2$). It follows from Proposition~\ref{hyper2} that there exits a $c$-hyperbolic unit $\mu$ in $E$. 

In the other case, $\chi$ is afforded by a real representation. Because of Theorem~\ref{nietsterk}, we can assume that $F$ is real and we can take $E$ to be not totally imaginary. This time we have that $mn > c$ and thus there exists a $c$-hyperbolic unit $\mu$ in $E$ as well. So in both cases we find a Galois extension $E$ of $F$ of degree $m$ and a $c$-hyperbolic unit in $E$.

\smallskip

Let $\sigma_1, \ldots, \sigma_n$ be the $n$ distinct monomorphisms of $F \to \C$ and take a matrix $P$ as in Theorem \ref{sigmas}, with corresponding representation $\tilde{\rho} = P^{-1} \rho P$ just as in the proof. Look at the matrix \[R= \I_m\otimes P=
\left(\begin{array}{cccc} P & 0 & \cdots & 0 \\
0 & P & \cdots & 0 \\
\vdots & \vdots & \ddots & \vdots \\
0 & 0 & \cdots & P\end{array}\right),\] then it holds that \[ R ( m\tilde{\rho} ) R^{-1} = \left(\begin{array}{cccc} P\tilde{\rho}P^{-1} & 0 & \cdots & 0 \\
0 &P \tilde{\rho} P^{-1} & \cdots & 0 \\
\vdots & \vdots & \ddots & \vdots \\
0 & 0 & \cdots &P\tilde{\rho} P^{-1}\end{array}\right) = m\rho.
\]

\smallskip

Let $$f_0(X) = \prod_{\sigma \in \Gal(E,F)}(X - \sigma(\mu)) = \sum_{j=0}^m b_j X^j \in F[X].$$ 
Take the polynomials $f_i = \sigma_i(f) = \displaystyle \sum_{j=0}^m a_{ij} X^j$ and form the matrices $$C_j = \begin{pmatrix}
a_{1j} & 0 & \ldots & 0\\
0 & a_{2j} & \ldots & 0\\
\vdots & \vdots & \ddots & \vdots \\
0 & 0 & \hdots & a_{nj}
\end{pmatrix} \otimes \I_k = \begin{pmatrix}
\sigma_1(b_j) & 0 & \ldots & 0\\
0 & \sigma_2(b_j) & \ldots & 0\\
\vdots & \vdots & \ddots & \vdots \\
0 & 0 & \hdots & \sigma_n(b_j) 
\end{pmatrix}\otimes \I_k .$$
It's easy to see that every matrix $C_j$ commutes with the representation $\tilde{\rho}$ and that $P C_j P^{-1} \in \GL_{kn}(\Q)$ because of Proposition~\ref{q}.
Now construct the matrix $$\tilde{C} = \begin{pmatrix}
0 & 0 & \ldots & 0 & -C_0 \\
\I_{kn} & 0 & \ldots & 0 & -C_1 \\
0 & \I_{kn} & \ldots & 0 & - C_2 \\
\vdots & \vdots & \ddots & \vdots & \vdots \\
0 & 0 & \ldots & \I_{kn} & - C_{m-1}
\end{pmatrix},
$$ then it is obvious that $\tilde{C}$ commutes with $m \tilde{\rho}$ because of Lemma \ref{com}. A direct computation shows that 
$$R \tilde{C} R^{-1} =  \begin{pmatrix}
0 & 0 & \ldots & 0 & -P C_0 P^{-1} \\
P P^{-1} & 0 & \ldots & 0 & -P C_1 P^{-1}  \\
0 & P P^{-1} & \ldots & 0 & -P C_2 P^{-1}  \\
\vdots & \vdots & \ddots & \vdots & \vdots \\
0 & 0 & \ldots & P P^{-1} & -P C_{m-1} P^{-1} \end{pmatrix}
$$  $$= \begin{pmatrix}
0 & 0 & \ldots & 0 & -P C_0 P^{-1} \\
\I_{kn} & 0 & \ldots & 0 & -P C_1 P^{-1}  \\
0 & \I_{kn} & \ldots & 0 & -P C_2 P^{-1}  \\
\vdots & \vdots & \ddots & \vdots & \vdots \\
0 & 0 & \ldots & \I_{kn} & -P C_{m-1} P^{-1} \end{pmatrix}$$ and thus $R \tilde{C} R^{-1} \in \GL_{knm}(\Q)$. Note that the characteristic polynomial $f(X)$ of $\tilde{C}$ equals $\left(\prod_{i=1}^n f_i(X) \right) ^k$. It's easy to check that the polynomial $f$ has coefficients in $\Q$ and all of its roots are conjugates of $\mu$. This means that $f$ is some power of the minimal polynomial of $\mu$. Therefore $\tilde{C}$ is $c$-hyperbolic and integer-like because of our choice of $\mu$. 
Since $C$ satisfies all conditions of the theorem, this completes the other direction of the main theorem.
\end{proof}

\section{Generalization to other classes of infra--nilmanifolds}
Up till now, we discussed infra--nilmanifolds modeled on a free $c$--step nilpotent Lie group, but in fact 
the results of this paper do generalize quite immediately to other classes of infra--nilmanifolds.

\smallskip

For example we can consider the Lie algebra $\lie_{c,d,r}$ which is the free $c$--step nilpotent and $d$--step solvable Lie algebra on $r$ generators 
(over $\R$) and let $G_{c,d,r}$ be the corresponding simply connected Lie group. Theorem B.\ of \cite{dv08-1} (which was slightly reformulated in \cite[Theorem 2.1]{dv11-1}) can now also be stated for manifolds 
modeled on $G_{c,d,r}$:
\begin{Thm}
Let $M$ be an infra-nilmanifold modeled on $G_{c,d,r}$, with holonomy group $H$ and associated abelianized rational holonomy 
representation $\overline{\varphi}: H \to \GL_r(\Q) $. Then the following are equivalent: 
\begin{center}
$M$ admits an Anosov diffeomorphism. \\
$\Updownarrow$ \\
There exists an integer-like $c$-hyperbolic matrix $C\in \GL_r(\Q) $ that commutes with every element of $\overline{\varphi}(H)$.
\end{center}
\end{Thm}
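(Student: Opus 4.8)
The plan is to reprove the theorem by running the argument of \cite[Theorem B]{dv08-1} (in the streamlined form of \cite[Theorem 2.1]{dv11-1}) essentially verbatim, after isolating the only two structural features of the free $c$-step nilpotent Lie group that this argument uses and checking that both are shared by $G_{c,d,r}$ (equivalently by $\lie_{c,d,r}$). The first is a \emph{rigidity} property coming from the universal property of $\lie_{c,d,r}$: an automorphism is determined by its values on a generating set, and every invertible linear map of the $r$-dimensional generating space extends to an automorphism, so abelianization gives an isomorphism $\Aut(\lie_{c,d,r})\cong\GL_r(\R)$, and correspondingly $\Aut(N_\Q)\cong\GL_r(\Q)$; under this identification the rational holonomy representation $\varphi$ and the abelianized holonomy representation $\overline{\varphi}$ coincide, and a matrix in $\GL_r(\Q)$ commutes with $\overline{\varphi}(H)$ precisely when the associated automorphism of $N_\Q$ commutes with the holonomy action. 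The second is \emph{eigenvalue control}: $\lie_{c,d,r}$ is generated in degree $1$ and nilpotent of class $c$, so every automorphism preserves the lower central filtration and its eigenvalues on the whole algebra are products of at most $c$ eigenvalues of the induced map on the abelianization; in particular a $c$-hyperbolic abelianized matrix forces a hyperbolic automorphism.

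I would then establish the two implications as follows. For ``$\Leftarrow$'', given an integer-like, $c$-hyperbolic $C\in\GL_r(\Q)$ commuting with $\overline{\varphi}(H)$, let $\delta\in\Aut(G_{c,d,r})$ be the automorphism inducing $C$ on the abelianization; by rigidity $\delta$ commutes with $\varphi(H)$ and by eigenvalue control $\delta$ is hyperbolic, and then the construction of \cite{dv08-1,dv11-1} --- which uses only the integer-like hypothesis together with commutation with the holonomy --- produces, after replacing $\delta$ by a suitable power and precomposing with a translation, an affine transformation normalizing $\Gamma$; since its linear part $\delta$ is hyperbolic, the induced affine infra-nilmanifold automorphism of $M$ is Anosov. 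For ``$\Rightarrow$'', using the reduction of \cite{dv08-1} (valid for any infra-nilmanifold) it suffices to treat an affine infra-nilmanifold automorphism $\bar\alpha$ with $\alpha=(g,\delta)$, $\delta$ hyperbolic and $\alpha\Gamma\alpha^{-1}=\Gamma$; then $\delta$ induces a matrix $C$ on $N_\Q/[N_\Q,N_\Q]\cong\Q^r$, and since $\alpha$ normalizes $\Gamma$ it preserves the lattice $N$ and the holonomy data up to commensurability (and exactly after passing to a power), so the standard commensurability and Mal'cev-rigidity arguments (as in \cite{dv08-1}) show that $C\in\GL_r(\Q)$, that $C$ is integer-like, and that $C$ commutes with $\overline{\varphi}(H)$.

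The one point that genuinely requires new work --- and which I expect to be the main obstacle --- is to show that this $C$ is in fact $c$-hyperbolic, i.e.\ that the ``$\Rightarrow$'' half of eigenvalue control survives the passage to the $d$-step solvable quotient: the derived-series relations kill some graded pieces of the free nilpotent algebra, so a priori a ``short'' eigenvalue-product of $C$ might fail to occur among the eigenvalues of $\delta_*$. To handle this I would compute the characteristic polynomial of $\delta_*$ on the $k$-th lower central quotient of $\lie_{c,d,r}$: it is a polynomial invariant of $C$, so by density it may be evaluated with $C$ diagonalizable, giving $\prod_{|\alpha|=k}\bigl(X-\prod_j\lambda_j^{a_j}\bigr)^{\dim(\lie_{c,d,r})_\alpha}$, the product over multidegrees $\alpha=(a_1,\dots,a_r)$ with $\sum_j a_j=k$ and $\lambda_1,\dots,\lambda_r$ the eigenvalues of $C$. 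Hence $\prod_j\lambda_j^{a_j}$ occurs among the eigenvalues of $\delta_*$ exactly when $(\lie_{c,d,r})_\alpha\neq 0$. When $\alpha$ is supported on a single generator the product equals $\lambda_j^{k}$, and $|\lambda_j^k|=1$ already forces $|\lambda_j|=1$, which is itself an eigenvalue of $\delta_*$ on the line spanned by $x_j$; for every other $\alpha$ with $1\le\sum_j a_j\le c$ one needs $(\lie_{c,d,r})_\alpha\neq 0$. Since $\lie_{c,d,r}$ surjects onto $\lie_{c,2,r}$, it is enough to see this for the free metabelian Lie algebra, and there the classical monotone bracket basis $[[x_a,x_b],x_{c_1},\dots,x_{c_{k-2}}]$ with $a>b$ and $b\le c_1\le\cdots\le c_{k-2}$ plainly contains a member in every multidegree having at least two generators in its support. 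With eigenvalue control thus restored the theorem follows; one could further decompose $\overline{\varphi}$ into $\Q$-irreducible components and feed the criterion into Theorem~\ref{vert} to obtain a components-counting reformulation in the style of Theorem~\ref{main}, but that is not needed for the statement above.
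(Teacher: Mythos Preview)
Your plan is exactly what the paper does: the paper's entire proof is the single sentence ``To prove this theorem one can follow almost word by word the original proof in \cite{dv08-1},'' and you are carrying this out, correctly isolating the two structural features of the free object that the \cite{dv08-1} argument uses and checking that both persist for $\lie_{c,d,r}$. Your treatment of the one genuinely new point --- that hyperbolicity of $\delta$ still forces $c$-hyperbolicity of $C$ even after the derived-series relations have killed some graded pieces --- is correct: the surjection onto the free metabelian algebra together with the monotone-bracket basis shows $(\lie_{c,d,r})_\alpha\neq 0$ for every multidegree with at least two generators in its support, and the single-generator case reduces to degree one just as you say.

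There is one slip in your ``rigidity'' paragraph, however: abelianization does \emph{not} give an isomorphism $\Aut(\lie_{c,d,r})\cong\GL_r(\R)$. Already for the Heisenberg algebra $\lie_{2,2,2}$ there are nontrivial IA-automorphisms (e.g.\ $x_1\mapsto x_1+[x_1,x_2]$), so the kernel is nontrivial and your sentence ``under this identification $\varphi$ and $\overline{\varphi}$ coincide'' is not justified. What is true --- and what the \cite{dv08-1} argument actually uses --- is that the surjection $\Aut(\lie_{c,d,r})\to\GL_r$ has a canonical section (the universal lift) with unipotent, hence torsion-free, kernel; consequently every finite subgroup of $\Aut(\lie_{c,d,r})$ is conjugate into the image of the section. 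After conjugating $\Gamma$ by the appropriate element of $\Aut(G)\subseteq\Aff(G)$ one may therefore assume that the holonomy acts by graded automorphisms, and \emph{then} your lift $\delta$ of $C$ genuinely commutes with $\varphi(H)$. This conjugacy step is part of the \cite{dv08-1} machinery that the paper's ``word by word'' remark is invoking; you should replace the claimed isomorphism by it.
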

To prove this theorem one can follow almost word by word the original proof in \cite{dv08-1}. 

\smallskip

Having obtained this theorem, we now also get the following generalization of our main result for free:
\begin{Thm}
Let $M$ be an infra-nilmanifold modeled on $G_{c,d,r}$, with holonomy group $H$ and associated abelianized rational holonomy 
representation $\overline{\varphi}: H \to \GL_r(\Q)$. Then the following are equivalent: 
\begin{center}
$M$ admits an Anosov diffeomorphism. \\
$\Updownarrow$ \\
Every $\Q$-irreducible component of
$\bar{\varphi}$ that occurs with multiplicity $m$,\\ splits in more
than $\frac{c}{m}$ components when seen as a representation over
$\R$.
\end{center}
\end{Thm}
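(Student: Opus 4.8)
The plan is to deduce this statement by combining two facts already in hand: the theorem stated just above -- which says that $M$ modeled on $G_{c,d,r}$ admits an Anosov diffeomorphism if and only if there is an integer-like $c$-hyperbolic matrix $C\in\GL_r(\Q)$ commuting with every element of $\overline{\varphi}(H)$ -- and Theorem~\ref{vert}. Thus it suffices to prove, for an arbitrary finite group $H$ and representation $\overline{\varphi}:H\to\GL_r(\Q)$, that the existence of such a $C$ is equivalent to the displayed splitting condition. This is precisely the reduction carried out in \cite{dv11-1} for the free nilpotent case; since only the abstract representation $\overline{\varphi}$ enters, I would simply repeat it here.

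Concretely, I would first decompose $\Q^r=\bigoplus_{i=1}^s V_i$ into its $H$-isotypic components, with $\overline{\varphi}|_{V_i}$ equivalent over $\Q$ to $m_i\rho_i$ for pairwise inequivalent $\Q$-irreducible representations $\rho_i$ occurring with multiplicity $m_i$. By Schur's lemma over $\Q$, any matrix commuting with $\overline{\varphi}(H)$ preserves each $V_i$, hence has the form $C=\bigoplus_i C_i$ with $C_i$ commuting with $m_i\rho_i$. For the implication ``$M$ Anosov $\Rightarrow$ splitting condition'', I would start from such a $C$ that is integer-like and $c$-hyperbolic: its characteristic polynomial is the product of those of the $C_i$, so Gauss's lemma puts each $\chi_{C_i}$ in $\Z[X]$, and since the (integer) constant terms multiply to $\pm1$ each $C_i$ is again integer-like; moreover the eigenvalues of each $C_i$ form a sub-multiset of those of $C$, so each $C_i$ is $c$-hyperbolic. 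Applying Proposition~\ref{det} to the pair $(C_i,m_i\rho_i)$ -- equivalently, the easy direction of Theorem~\ref{vert} obtained at the end of Section~\ref{firstdirection} -- then forces $\rho_i$ to split into strictly more than $c/m_i$ components over $\R$.

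For the converse, Theorem~\ref{vert} hands me, for each $i$, an integer-like $c$-hyperbolic matrix $C_i$ commuting with $m_i\rho_i$, and $C=\bigoplus_i C_i$ commutes with $\overline{\varphi}(H)$ and is integer-like (a product of monic integer polynomials, with determinant $\pm1$). I expect the one genuinely delicate point -- and the main obstacle -- to be that $C$ need not be $c$-hyperbolic just because each $C_i$ is: a product of at most $c$ eigenvalues of $C$ taken from \emph{several different} blocks might have absolute value $1$. To get around this I would, as in \cite{dv11-1}, exploit the freedom in the construction of Theorem~\ref{vert}: by the Remark after Proposition~\ref{hyper} the $c$-hyperbolic unit $\mu_i$ underlying $C_i$ can be chosen to be a Pisot number, and replacing $\mu_i$ by a power $\mu_i^{N_i}$ keeps $C_i$ integer-like, $c$-hyperbolic and commuting with $m_i\rho_i$, while merely scaling the vector $\bigl(\log|\sigma(\mu_i)|\bigr)_\sigma$ of logarithms of its conjugates by $N_i$. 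For each of the finitely many ``mixed'' short multiplicative relations among the eigenvalues of $\bigoplus_i C_i$, the set of tuples $(N_1,\dots,N_s)$ realizing it is a proper affine hyperplane -- here one uses that within a single block no short product is trivial, so each such relation depends nontrivially on at least two of the $N_i$ -- and a generic choice of positive integers $N_i$ avoids all of them. The resulting $C$ is integer-like, $c$-hyperbolic and commutes with $\overline{\varphi}(H)$, which by the criterion above produces an Anosov diffeomorphism on $M$.
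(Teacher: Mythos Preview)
Your proposal is correct and matches the paper's approach exactly: the paper simply records that the result follows ``for free'' once one has the preceding commuting-matrix criterion for $G_{c,d,r}$, since the reduction of that criterion to Theorem~\ref{vert} via the isotypic decomposition of $\overline{\varphi}$ is purely a statement about the representation and was already carried out in \cite{dv11-1}. Your reconstruction of that reduction --- including the delicate gluing step in which each $C_i$ is replaced by a suitable power $C_i^{N_i}$ so as to avoid the finitely many ``mixed'' short multiplicative relations --- is sound; note only that the Pisot property is not actually needed there, since the $c$-hyperbolicity of each $C_i$ alone forces every coefficient attached to an involved block in the linear constraint on $(N_1,\dots,N_s)$ to be nonzero.
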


\section{Some applications of Theorem~\ref{main}}
\label{applic}
In \cite{dv11-1} it was already shown how Theorem~\ref{main} could be used to construct infra-nilmanifolds with an abelian holonomy group and allowing an Anosov diffeomorphism. In this section, we will now show how we can also apply this theorem in the case of non-abelian holonomy groups. We first recall some of the facts which were developed in \cite{dv11-1}.

\smallskip

Let $N$ be a torsion--free, finitely generated nilpotent group $N$, then we define for all positive integers $i$ the subgroup
$$\Gamma_i(N)=\sqrt{\gamma_i(N)}=\{x\in N\mid \exists n\in\N_0:
x^n\in\gamma_i(N)\}=N \cap \gamma_i(N_\Q) ,$$
where the $\gamma_i(N)$ indicate the terms of the lower central series of $N$. 
These groups $\Gamma_i(N)$ are fully characteristic subgroups of $N$. 

\medskip

The following theorem which was proved in \cite[Theorem 4.1]{dv11-1} is very useful to find examples of 
infra-nilmanifolds with a specific rational holonomy representation. 

\begin{Thm}\label{construct1}
Let $\varphi:H\rightarrow\Aut(N)$ be a faithful representation of a
finite group $H$ into the group of automorphisms of a
torsion-free, finitely generated nilpotent group $N$, and denote
with
$$\bar{\varphi}_i:H\to\Aut\left(\frac{\Gamma_i(N)}{\Gamma_{i+1}(N)}\right)\cong\Aut(\Z^{k_i})$$ the induced 
morphism. If there exists, for some positive integer $i$, a torsion-free extension
$$1\to\frac{\Gamma_i(N)}{\Gamma_{i+1}(N)}\to\bar{\Gamma}\to H\to 1$$ inducing
$\bar{\varphi}_i$, then there exists an almost--Bieberbach group
$\Gamma$ with holonomy group $H$, whose translation subgroup is a
finite index subgroup of $N$ and such that the rational holonomy
representation $\psi:~H\to\Aut(N_\Q)$ coincides with
$\varphi:H\to\Aut(N)\subseteq\Aut(N_\Q)$.
\end{Thm}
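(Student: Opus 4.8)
The plan is to realise $\varphi$ by a torsion-free extension of $H$ and then invoke the classical dictionary between such extensions and infra-nilmanifolds. First I would reduce the statement to the following purely group-theoretic assertion: there exist an $H$-invariant subgroup $N_0\le N$ of finite index and a torsion-free group $\Gamma$ fitting into an exact sequence $1\to N_0\to\Gamma\to H\to 1$ which induces on $(N_0)_\Q=N_\Q$ the representation $\varphi$. Granting such a $\Gamma$, it is finitely generated, virtually nilpotent and torsion-free, hence by the standard theory of almost-crystallographic groups (built on the Mal'cev completion of $N$ and the rigidity of lattices in simply connected nilpotent Lie groups) it is an almost-Bieberbach group modelled on the simply connected nilpotent Lie group $G$ admitting $N$, equivalently $N_0$, as a lattice; its holonomy group is $H$, and because $N_0$ spans the same radicable hull $N_\Q$ with unchanged $H$-action, its rational holonomy representation is exactly $\varphi$. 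So the whole problem is the construction of the torsion-free $\Gamma$.

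Second, I would observe that \emph{some} extension realising $\varphi$ always exists: $N$ is itself $H$-invariant (as $\varphi(H)\subseteq\Aut(N)$), so the semidirect product $E_0=N\rtimes_\varphi H$ is one, though of course far from torsion-free. For a fixed finite-index $H$-invariant $N_0\le N$, the extensions of $H$ by $N_0$ inducing the same action on $N_\Q$ form, once non-empty, a torsor over $H^2(H;Z(N_0))$. I would then recall the standard torsion-freeness criterion: such an extension $\Gamma$ is torsion-free if and only if, for every subgroup $H'\le H$ of prime order, the restricted extension $1\to N_0\to\Gamma_{H'}\to H'\to 1$ (where $\Gamma_{H'}$ is the preimage of $H'$) does not split.

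Third --- and here the hypothesis on $\bar\varphi_i$ is used --- I would force this non-splitting at the level of the $i$-th subquotient. Reduce $E_0$ modulo $\Gamma_{i+1}(N_0)$ to an extension $\bar E_0$ of $H$ by $\tilde N:=N_0/\Gamma_{i+1}(N_0)$, and set $A:=\Gamma_i(N_0)/\Gamma_{i+1}(N_0)=\Gamma_i(\tilde N)$, which is free abelian, commensurable with $\Gamma_i(N)/\Gamma_{i+1}(N)$, carries $\bar\varphi_i$ rationally, and is characteristic and central in $\tilde N$. Since $A$ is central in $\tilde N$, the extensions of $H$ by $\tilde N$ with prescribed pushout along $\tilde N\to\tilde N/A$ form a torsor over $H^2(H;A)$, so I may choose such an extension $\bar E$ whose class in that torsor equals the image of the class of $\bar\Gamma$. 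The point is the elementary observation: if the restricted extension $1\to N_0\to E_{H'}\to H'\to 1$ of a lift $E$ of $\bar E$ splits, say by an element $x$ of order $p=|H'|$, then the image $\tilde x$ of $x$ in $E_{H'}/\Gamma_{i+1}(N_0)$ again has order $p$ (since $\Gamma_{i+1}(N_0)$ is torsion-free), and the subgroup $\langle\tilde x\rangle$ maps isomorphically onto $H'$ in the pushout by $\tilde N/A$; reading off the $A$-component of $\bar E$ restricted to $H'$ from this section shows it vanishes. But the class of $\bar\Gamma$ restricts non-trivially to $H^2(H';-)$ for every prime-order $H'$, because a split extension $\Z^{k_i}\rtimes H'$ has torsion while $\bar\Gamma$ does not. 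Hence, provided $\bar E$ lifts to an extension $E$ of $H$ by $N_0$, that $E$ is torsion-free, so we may take $\Gamma=E$, and we are done by the reduction of the first paragraph.

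The step I expect to be the real obstacle is precisely this lifting of $\bar E$ --- an extension of $H$ by $\tilde N=N_0/\Gamma_{i+1}(N_0)$ --- to an extension of $H$ by $N_0$ itself, with unchanged rational action. When $i$ equals the nilpotency class of $N$, there is nothing to do, since then $\Gamma_{i+1}(N_0)=1$; in general one must un-quotient by $\Gamma_{i+1}(N_0)$ one central step at a time, and at each step there is an obstruction in a group $H^3(H;-)$ with coefficients in an abelian subquotient of $\Gamma_{i+1}(N_0)$. The original extension $E_0$ guarantees that the corresponding obstructions vanish for $\bar E_0$, but to propagate this after shifting the class by an element of $H^2(H;A)$ one typically has to replace $N$ by a finite-index $H$-invariant subgroup at each stage, exploiting that these obstruction groups are torsion; this is essentially the Seifert-type construction of Lee--Raymond and Dekimpe. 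Carrying it out --- together with checking that after all the finite-index passages the radicable hull and the $H$-action, hence the rational holonomy representation, are unchanged --- is the technical core; the remaining ingredients (the torsion-freeness criterion, the split-to-split implication, and the passage from a torsion-free virtually nilpotent group to an infra-nilmanifold) are by now standard.
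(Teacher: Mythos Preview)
The paper does not prove this theorem; it is quoted from \cite[Theorem~4.1]{dv11-1} and used as a black box in Section~\ref{applic}, so there is no in-paper argument to compare against. Your overall strategy --- produce a torsion-free extension of $H$ by a finite-index $H$-stable subgroup of $N$, detect torsion via splittings over prime-order subgroups, inject the class of $\bar\Gamma$ at level $i$, and then lift through the central series \`a la Lee--Raymond/Dekimpe while killing $H^3$-obstructions by finite-index passage --- is indeed the approach of the cited reference, and you have correctly located the main technical weight in the iterated lifting.

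There is, however, a real gap in your third paragraph. From a splitting of $\bar E|_{H'}$ you want to conclude $\alpha|_{H'}=0$ in $H^2(H';A)$, but the section $\langle\tilde x\rangle$ only shows that the image of $\alpha|_{H'}$ in $H^2\bigl(H';Z(\tilde N)\bigr)$ vanishes; when $A=\Gamma_i(\tilde N)\subsetneq Z(\tilde N)$ that map need not be injective (equivalently, the complement to $\tilde N/A$ in $(\tilde N/A)\rtimes H'$ furnished by $\tilde x$ need not be conjugate to the canonical one). For a concrete failure take $H=\Z/2$ and $N$ the direct product of the integer Heisenberg group with $\Z$, on generators $x,y,z=[x,y],w$, equipped with the involution $x\mapsto x^{-1}$, $y\mapsto y^{-1}$, $z\mapsto z$, $w\mapsto zw^{-1}$, and set $i=2$: here $A=\langle z\rangle$ carries the trivial action and $\bar\Gamma\cong\Z$ is torsion-free, yet one computes $H^2\bigl(\Z/2;Z(N)\bigr)=0$, so your $\bar E=E$ is the semidirect product and has torsion. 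The theorem still holds in this example, but only after replacing $N$ by the index-two subgroup generated by $x,y,z,w^2$; the finite-index passage is thus needed already to make the twist effective, not only, as you suggest, to kill $H^3$-obstructions in the subsequent lifting. In \cite{dv11-1} this is handled by building $\Gamma$ one central step at a time and allowing a finite-index replacement at each stage.
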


Now, we let $N_{r,c}$ be the free $c$--step nilpotent group on $r$ generators and we use 
$N_{r,c,\Q}$ to denote the rational Mal'cev completion of $N_{r,c}$. The corresponding Lie group (so the real Mal'cev completion) is the free $c$--step nilpotent Lie group 
on $r$ generators. We have the natural homomorphisms
\[ \mu: \Aut(  N_{r,c} ) \rightarrow \Aut( N_{r,c}/[N_{r,c},N_{r,c}])\cong \GL_r(\Z)\mbox{ and }\]
\[\mu_\Q: \Aut(  N_{r,c,\Q} ) \rightarrow \Aut( N_{r,c,\Q}/[N_{r,c,\Q},N_{r,c,\Q}])\cong \GL_r(\Q),\]
which are both onto. For making the identifications with $\GL_r(\Z)$ and $\GL_r(\Q)$ we fix a 
set of generators $x_1,x_2,\ldots,x_r$ of $N_{r,c}$ and the images of these these generators in the respective abelianizations 
give rise to a basis of the free abelian group $N_{r,c}/[N_{r,c},N_{r,c}]$ and the $\Q$--vector space $N_{r,c,\Q}/[N_{r,c,\Q},N_{r,c,\Q}]$ w.r.t.\ which we 
represent an automorphism as a matrix.\\
We will use the following observation: 
\begin{Lem}\label{lift finite group}
Let $F\subseteq \GL_r(\Z)$ be a finite subgroup. Then there exist a finite subgroup $\tilde{F}$ of
 $ \Aut(  N_{r,c,\Q} )$ and a finitely generated subgroup $N\subseteq N_{r,c,\Q} $ such that 
\begin{enumerate}
\item $\mu_\Q(\tilde{F})=F$,
\item $N$ contains $N_{r,c}$ as a subgroup of finite index,
\item $N\gamma_2(N_{r,c,\Q})= N_{r,c} \gamma_2 (N_{r,c,\Q})$ and 
\item $\forall \alpha \in \tilde{F}: \; \alpha(N) =N$.
\end{enumerate}
\end{Lem}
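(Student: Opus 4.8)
The plan is to produce $\tilde F$ by finding, for each element of $F$, a canonical lift to $\Aut(N_{r,c,\Q})$ and checking that these lifts multiply correctly. The natural candidate comes from the freeness of $N_{r,c}$: any matrix $A\in\GL_r(\Z)$ sends the ordered generating tuple $(x_1,\dots,x_r)$ to an $r$-tuple of elements of $N_{r,c}/[N_{r,c},N_{r,c}]$; lifting these images arbitrarily to words in $N_{r,c}$ and invoking the universal property of the free nilpotent group gives an endomorphism of $N_{r,c}$ inducing $A$ on the abelianization, and it is an automorphism precisely because $A$ is invertible. The subtlety is that this lift is not unique and not multiplicative, so instead I would work rationally: over $\Q$ the group $\Aut(N_{r,c,\Q})$ is a linear algebraic group and $\mu_\Q$ is a surjective homomorphism of algebraic groups whose kernel $U=\ker\mu_\Q$ is unipotent (it acts trivially on the abelianization, hence trivially on each graded quotient, hence is unipotent). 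Thus we have a short exact sequence of algebraic groups
\begin{equation*}
1\to U\to \Aut(N_{r,c,\Q})\xrightarrow{\mu_\Q} \GL_r(\Q)\to 1
\end{equation*}
with $U$ unipotent, and the restriction over the finite subgroup $F\subseteq\GL_r(\Q)$ gives an extension of $F$ by the unipotent (hence uniquely divisible, torsion-free) group $U(\Q)$. Such an extension splits, because $H^2(F,U(\Q))$ vanishes: $U(\Q)$ is a $\Q$-vector space under the canonical additive structure coming from the logarithm on a unipotent group, so the cohomology of the finite group $F$ with these coefficients is zero. A splitting gives a homomorphism $F\to\Aut(N_{r,c,\Q})$ whose image is the desired finite subgroup $\tilde F$ with $\mu_\Q(\tilde F)=F$, establishing (1).

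Once $\tilde F$ is fixed, conditions (2)--(4) are obtained by an averaging argument. For (4) and (2): start with $N_{r,c}$ itself and set $N=\langle \alpha(x_i)\mid \alpha\in\tilde F,\ 1\le i\le r\rangle$, the subgroup generated by all $\tilde F$-translates of the fixed generators. By construction $N$ is $\tilde F$-invariant and finitely generated (the group is finite and there are finitely many generators), and $N\supseteq N_{r,c}$. That $N$ has finite index in $N_{r,c}$ — equivalently, that $N$ is again a lattice in $N_{r,c,\Q}$ of the same rank — follows because each $\alpha(x_i)$ lies in $N_{r,c,\Q}$ and $\tilde F$ is finite, so $N$ is a finitely generated subgroup of $N_{r,c,\Q}$ containing the lattice $N_{r,c}$; any such subgroup of a torsion-free finitely generated nilpotent group is commensurable with $N_{r,c}$, so $[N:N_{r,c}]<\infty$. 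For (3): project everything to the abelianization. There $N\gamma_2(N_{r,c,\Q})/\gamma_2(N_{r,c,\Q})$ is generated by the classes of the $\alpha(x_i)$, which are exactly the images of $\mu_\Q(\alpha)\in F\subseteq\GL_r(\Z)$ applied to the standard basis vectors, i.e.\ integer vectors lying in $N_{r,c}/[N_{r,c},N_{r,c}]$; hence $N\gamma_2(N_{r,c,\Q})=N_{r,c}\gamma_2(N_{r,c,\Q})$, using that $F$ preserves the integral lattice. This gives (3).

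The main obstacle is the construction of $\tilde F$, i.e.\ lifting $F$ from $\GL_r(\Q)$ to $\Aut(N_{r,c,\Q})$ as a finite subgroup; everything after that is a soft averaging/commensurability argument. The key point to get right there is the vanishing of $H^2(F,U(\Q))$, which rests on identifying $U$ with the additive group of a finite-dimensional $\Q$-vector space via the logarithm (the Lie algebra of $U$), together with the standard fact that a finite group has no higher cohomology with coefficients in a $\Q$-vector space. An alternative, more hands-on route would bypass cohomology entirely: pick any (not necessarily multiplicative) set-theoretic lift $s:F\to\Aut(N_{r,c,\Q})$ with $\mu_\Q\circ s=\mathrm{id}$, and average it over $F$ using the $\Q$-vector-space structure on $U$ to correct it into an honest homomorphism — this is just the explicit proof that the obstruction class dies. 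Either way, once $\tilde F$ exists as a finite subgroup of $\Aut(N_{r,c,\Q})$ projecting onto $F$, the lemma follows.
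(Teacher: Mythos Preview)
Your construction of $N$ is essentially the paper's: the paper forms the subgroup of $N_{r,c,\Q}\rtimes\tilde F$ generated by $N_{r,c}$ and $\tilde F$ and intersects with $N_{r,c,\Q}$, which unwinds to exactly the subgroup generated by the $\tilde F$-orbit of the generators $x_i$. Your verification of (2)--(4) is correct and in fact more explicit than the paper's, which simply asserts these properties.

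The difference is in producing $\tilde F$. The paper cites a result of Kuz'min; you attempt a direct argument via vanishing of $H^2(F,U(\Q))$. The strategy is sound but there is a gap as written: the kernel $U=\ker\mu_\Q$ is \emph{not} abelian once $c\ge 3$ (for instance the automorphisms $x_1\mapsto x_1[x_1,x_2]$ and $x_2\mapsto x_2[x_1,x_2]$, extended freely, do not commute in class~$3$), so the logarithm does not endow $U(\Q)$ with a $\Q$-vector-space \emph{group} structure, and $H^2(F,U)$ is not even defined as ordinary group cohomology. The averaging alternative you sketch runs into the same obstacle. The standard repair is to filter $U$ by the central series $U_i=\{\alpha\in U : \alpha \mbox{ acts trivially on } N_{r,c,\Q}/\gamma_{i+1}(N_{r,c,\Q})\}$; each successive quotient $U_i/U_{i+1}$ \emph{is} a $\Q$-vector space, and one lifts a splitting step by step up this tower using $H^2(F,U_i/U_{i+1})=0$ at each stage. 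With that correction your argument goes through and in effect reproves Kuz'min's result for the free nilpotent case.
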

\begin{proof}
The existence of a finite subgroup $\tilde{F}\subseteq \Aut(  N_{r,c,\Q} )$ such that $\mu_\Q(\tilde{F})=F$ is a result 
which is due to Kuz'min (see \cite[page 91]{kuzm99-1}). Now, consider $N_{r,c,\Q}\rtimes \tilde{F}$ and let $\tilde{N}$ be the 
subgroup of $N_{r,c,\Q}\rtimes \tilde{F}$ which is generated by $N_{r,c}$ and $\tilde{F}$. As both $N_{r,c}$ and 
$\tilde{F}$ are finitely 
generated, we have that $\tilde{N}$ is finitely generated and hence also $N=\tilde{N} \cap N_{r,c,\Q}$, which is of 
finite index in $\tilde{N}$, is finitely generated. By this construction $N$ satisfies properties  2., 3.\ and 4.\ in the statement 
of this lemma.
\end{proof}
We can now prove the following:
\begin{Thm}\label{examp1}
Let $H$ be any finite group and $c$ be any positive integer. Then there exists a positive integer $K$ such that for any 
$k\geq K$ there is a infra-nilmanifold which is modeled on the free $c$--step nilpotent Lie group on $k$ generators, admits an Anosov diffeomorphism and has $H$ as its holonomy group.
\end{Thm}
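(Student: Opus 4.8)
The plan is to realize a suitable permutation-type representation of $H$ as (a summand of) the abelianized rational holonomy representation of an infra-nilmanifold modeled on a free $c$-step nilpotent Lie group, and then invoke Theorem~\ref{main} to decide when an Anosov diffeomorphism exists. First I would embed $H$ into a general linear group over $\Z$: pick a faithful $\Q$-representation of $H$ (for instance a subrepresentation of the regular representation, which is defined over $\Q$), choose a $H$-invariant lattice, and thereby get a faithful finite subgroup $F\subseteq\GL_d(\Z)$ for some $d$. By Lemma~\ref{lift finite group} there is a finite subgroup $\tilde F\subseteq\Aut(N_{d,c,\Q})$ with $\mu_\Q(\tilde F)=F$ together with a finitely generated $N\subseteq N_{d,c,\Q}$ containing $N_{d,c}$ with finite index, stable under $\tilde F$, and with $N\gamma_2=N_{d,c}\gamma_2$; in particular the induced action of $\tilde F$ on $N/\gamma_2(N)\otimes\Q\cong N_{d,c,\Q}/[N_{d,c,\Q},N_{d,c,\Q}]$ is exactly $F$ up to the identification of bases.

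Next I would promote this action to an actual almost-Bieberbach group. Applying Theorem~\ref{construct1} with $i=1$, it suffices to produce, for the induced morphism $\bar\varphi_1:H\to\Aut(\Gamma_1(N)/\Gamma_2(N))\cong\GL_d(\Z)$, a torsion-free extension $1\to\Z^d\to\bar\Gamma\to H\to 1$ inducing $\bar\varphi_1$. Here I would use the standard trick of replacing the representation by a large multiple: if $\varphi$ is any faithful integral representation of $H$, then $k\varphi$ for $k$ large admits a torsion-free (Bieberbach) extension — this is a classical fact about which $\Z H$-lattices occur as holonomy modules of Bieberbach groups, and it is precisely why the constant $K$ appears in the statement. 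So for every $k\ge K$ (with $K$ depending only on $H$, not on $c$), the representation $k\varphi$ of $H$ on $\Z^{kd}$ lifts to a torsion-free extension, and Theorem~\ref{construct1} hands us an almost-Bieberbach group $\Gamma$ with holonomy group $H$, translation subgroup of finite index in $N_{kd,c}$ (after running Lemma~\ref{lift finite group} with $kd$ generators), and rational holonomy representation whose abelianization is $k\varphi$, i.e.\ $\bar\varphi=\underbrace{\varphi\oplus\cdots\oplus\varphi}_{k}$.

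Finally I would check the Anosov criterion of Theorem~\ref{main}. Decompose $\varphi$ into $\Q$-irreducible components $\rho_1,\ldots,\rho_s$ (with multiplicities); in $\bar\varphi=k\varphi$ each $\Q$-irreducible component $\rho_j$ occurs with multiplicity $m_j=k\cdot(\text{mult.\ of }\rho_j\text{ in }\varphi)$. Theorem~\ref{main} requires that each such component splits over $\R$ into strictly more than $c/m_j$ pieces; since each $\rho_j$ splits over $\R$ into at least one component and $m_j\ge k$, it is enough to take $K$ also large enough that $K>c$, so that $1>c/m_j$ holds for every $j$. Thus for $k\ge K:=\max\{K_0(H),\,c+1\}$ — where $K_0(H)$ is the threshold from the torsion-free-extension step — the manifold $\Gamma\backslash G_{kd,c,\R}$ is modeled on a free $c$-step nilpotent Lie group on $kd$ generators, has holonomy group $H$, and admits an Anosov diffeomorphism. (If one wants the number of generators to be exactly $k$ rather than $kd$, one absorbs the factor $d$ into the definition of $K$ and reindexes.)

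The main obstacle is the torsion-free extension step: one must know that $k\varphi$ admits a torsion-free extension by $H$ for all large $k$, and to extract a bound $K_0(H)$ independent of $c$. I would handle this by citing the standard theory of holonomy representations of Bieberbach groups — the obstruction to torsion-freeness lives in a cohomology group that is killed after passing to a sufficiently large multiple of the representation (and $K_0(H)$ can be taken to depend only on $|H|$, e.g.\ via the exponent of $H^2(H;-)$), exactly as was used in \cite{dv11-1} to construct the abelian-holonomy examples. Everything else is routine bookkeeping with the results already established above.
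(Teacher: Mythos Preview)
Your outline is on the right track but contains two genuine gaps.

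\medskip

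\textbf{The torsion-free extension step is wrong as stated.} It is \emph{not} true that for an arbitrary faithful integral representation $\varphi$ of $H$ the multiple $k\varphi$ admits a torsion-free (Bieberbach) extension for large $k$. Take $H=\Z/p$ and let $\varphi$ be the faithful $(p-1)$-dimensional irreducible rational representation realized on the lattice $\Z[\zeta_p]$; then $(\Z[\zeta_p])^H=0$, so $H^2(H,k\varphi)=0$ for every $k$, every extension splits, and each one contains $H$ as a torsion subgroup. Your description of the mechanism is also backwards: torsion-freeness is not governed by an obstruction that gets killed, but by the \emph{existence} of a special class in $H^2(H,\Z^n)$ restricting nontrivially to every cyclic subgroup of prime order --- and taking direct sums of a module with trivial $H^2$ cannot create such a class. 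The paper avoids this entirely by invoking Auslander--Kuranishi at the outset: there is \emph{some} faithful $\psi:H\to\GL_n(\Z)$ that already comes with a torsion-free extension, and then any representation of the form $\psi\oplus\eta$ inherits one by padding the $2$-cocycle with zeros on the $\eta$-summand.

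\medskip

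\textbf{The ``absorb the factor $d$'' remark does not give every $k\ge K$.} Your construction produces infra-nilmanifolds modeled on free Lie groups with $kd$ generators, i.e.\ only multiples of $d$; reindexing cannot fill in the gaps. The paper's fix is to take $\varphi_1=(c+1)\psi\oplus 1^{\,k-(c+1)n}$, using copies of the trivial representation to pad out to exactly $k$ generators for every $k\ge K=(c+1)(n+1)$. Since the trivial representation then also occurs with multiplicity at least $c+1$, the hypothesis of Theorem~\ref{main} is satisfied for every $\Q$-irreducible component, and the Anosov conclusion follows. Once you start from the Auslander--Kuranishi representation $\psi$ rather than an arbitrary faithful one, both issues disappear and the rest of your argument goes through.
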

\begin{proof}
It is well known that any finite group $H$ can be realized as the holonomy group of a flat manifold (\cite{ak57-1}). Hence, there exists a representation 
\[ \psi:H\rightarrow \GL_n(\Z)\]
and a torsion-free extension 
\begin{equation}\label{extensie}
0 \to \Z^n \to \bar{\Gamma}_1 \to H\to 1
\end{equation}
inducing $\psi$. Now, take $K=(c+1)(n+1)$ and choose any $k\geq K$. Let 
$\varphi_1: H \rightarrow \GL_k(\Z)$ be the representation 
\[ \varphi_1= \underbrace{ \psi\oplus \psi\oplus \cdots \oplus \psi}_{c+1\mbox{ times }} \oplus \underbrace{
1 \oplus 1\oplus \cdots \oplus 1}_{k - (c+1) n  \mbox{ times}} \]
where $1$ denotes the trivial $1$--dimensional representation. Note that there are at least $c+1$ of these trivial factors.  Let $f$ be a $2$-cocycle such that 
the cohomology class   $\langle f \rangle\in H^2(H,\Z^n)$ describes the extension (\ref{extensie}), where of course
$\Z^n$ is a $H$--module via $\psi$. We can decompose the $H$--module $\Z^k$ (via $\varphi_1$) as 
a direct sum $\Z^n \oplus \Z^{k-n}$ where $H$ acts on the $\Z^n$--part via $\psi$ and on the $\Z^{k-n}$--part 
via $c$ times $\psi$ and $k-(c+1)n$ times the trivial representation. Then $H^2(H,\Z^k) = H^2(H,\Z^n) \oplus 
H^2(H,\Z^{k-n})$ and the $2$--cohomology class corresponding to $\langle f \rangle \oplus \langle 0 \rangle$ 
determines an extension 
\[  0 \to \Z^k \to \bar{\Gamma} \to H \to 1\]
which is also torsion-free and which induces $\varphi_1: H \rightarrow \GL_k(\Z)$.\\
Now, let $N_{k,c}$ be the free $c$--step nilpotent group on $k$ generators. By Lemma \ref{lift finite group}, we can find 
a group $N$ containing $N_{k,c}$ as a subgroup of finite index and a morphism $\varphi:H \to  \Aut(N)$ which induces $\varphi_1$ on 
\[ \frac{\Gamma_1(N)}{\Gamma_2(N)}= \frac{N}{N\cap \gamma_2(N_{k,c,\Q})}=
\frac{N\gamma_2(N_{k,c,\Q})}{\gamma_2(N_{k,c,\Q})}=
\frac{\Gamma_1(N_{k,c})}{\Gamma_2(N_{k,c})}\cong \Z^k.\]

By applying Theorem~\ref{construct1} we can conclude that there exists an almost--Bieberbach group $\Gamma$ 
with holonomy group $H$ and whose translation subgroup is of finite index in $N$. Hence, $\Gamma$ determines an infra-nilmanifold $M$ which is modeled on the free $c$--step nilpotent Lie group on $k$ generators and 
has $H$ as its holonomy group. Moreover, the rational holonomy representation of $M$ coincides with 
$\varphi:H \rightarrow\Aut(N) \subseteq \Aut(N_{k,c,\Q})$ and it is obvious that the abelianized holonomy 
representation, then coincides with $\varphi_1:H \rightarrow \GL_k(\Z) \subseteq \GL_k(\Q)$. By construction,
each $\Q$--irreducible component of $\varphi_1$ occurs with at least multiplicity $c+1$, and hence the conditions of Theorem~\ref{main} are trivially satisfied, which allows us to conclude that $M$ admits an Anosov diffeomorphism
\end{proof}

The previous result shows that any finite group can appear as the holonomy group of an infra-nilmanifold with an Anosov diffeomorphism and which is modeled on a free $c$--step nilpotent Lie group, but we have to allow large enough dimensions. For the next result, we will fix the smallest non-abelian group and determine such an infra-nilmanifold of minimal dimension. 

\medskip

So let $H=D_3=\langle a,b\;| \;a^3 =b^2=  (ab)^2=1\rangle$ be the dihedral group of order six (or the symmetric group of degree 3). The conjugacy classes of $D_3$ are $\{1\}$, $\{a, a^2\}$ and $\{b,ab,a^2b\}$. The character table of 
$D_3$ is given by 
\[ 
\begin{array}{c|ccc}
 & 1 & a & b \\ \hline
 \chi_1 & 1 & 1 & 1 \\
 \chi_2 & 1 & 1 & -1\\
 \chi_3 & 2 & -1 & 0
 \end{array}.
\]
These characters correspond to the representations 
\[ \rho_1: D_3 \rightarrow \GL_1(\Q) \mbox{ with } \rho_1(a)=1 \mbox{ and } \rho_1(b)=1;\]
\[ \rho_2: D_3 \rightarrow \GL_1(\Q) \mbox{ with } \rho_1(a)=1 \mbox{ and } \rho_1(b)=-1;\]
\[ \rho_3: D_3 \rightarrow \GL_2(\Q) \mbox{ with } \rho_1(a)=\begin{pmatrix}0 & -1\\1 & -1\end{pmatrix}
\mbox{ and } \rho_1(b)=\begin{pmatrix} 0 & -1 \\ -1 & 0 \end{pmatrix}.\]
It follows that the $\Q$--irreducible representations of $H$ are also $\C$--irreducible. Note that all representations 
are already given as integral representations. We remark that the representation 
\[ \rho'_3:H \rightarrow \GL_2(\Q) \mbox{ with } \rho_1(a)=\begin{pmatrix}0 & -1\\1 & -1\end{pmatrix} 
\mbox{ and } \rho_1(b)=\begin{pmatrix} 0 & 1 \\ 1 & 0 \end{pmatrix}\]
is equivalent to $\rho_3$ when considered as a representation over $\Q$, but they are not when considered over $\Z$.
Hence, there are 4 non-equivalent irreducible $\Z$--representations of $H$, see e.g.\ \cite{bbnwz78-1}.

\medskip

Now assume that $M=\Gamma\backslash G$ is an infra-nilmanifold which is modeled on a free $c$--step nilpotent 
Lie group, which has $H$ as its holonomy group and such that $M$ admits an Anosov diffeomorphism. The abelianized rational holonomy representation $\varphi: D_3 \rightarrow \GL_n(\Q)$ of $M$ must be faithful and hence it must contain at least one component which is $\Q$--equivalent to $\rho_3$. As this component is $\R$--irreducible and $M$ admits an Anosov diffeomorphism, it must in fact appear at least $c+1$ times by Theorem~\ref{main}. We will now show that this 
lower bound is sharp.
\begin{Prop}\label{D3}
Let $c>1$. Then there exists an infra-nilmanifold $M$ which is modeled on a free $c$--step nilpotent Lie group and such that $M$ has holonomy group $D_3$, admits an Anosov diffeomorphism and its abelianized 
holonomy representation is equivalent to $\underbrace{\rho_3\oplus \rho_3 \oplus \cdots \oplus \rho_3}_{c+1\mbox{ times}}$.
\end{Prop}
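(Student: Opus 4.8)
The plan is to apply the construction machinery of Section~\ref{applic} --- in particular Lemma~\ref{lift finite group} and Theorem~\ref{construct1} --- to the specific representation $\varphi_1 = (c+1)\rho_3 : D_3 \to \GL_{2(c+1)}(\Z)$, and then to check that the resulting infra-nilmanifold satisfies the criterion of Theorem~\ref{main}. First I would recall that $D_3$ is realized as the holonomy group of a flat manifold whose holonomy representation is (equivalent to) $\rho_3$; this follows from the theory of Bieberbach groups, but since the point here is to get a torsion-free extension inducing $\rho_3$ exactly, I would either invoke the general realizability result \cite{ak57-1} as in Theorem~\ref{examp1}, or --- better, since I want minimal dimension --- exhibit an explicit torsion-free extension $0 \to \Z^2 \to \bar\Gamma_1 \to D_3 \to 1$ inducing $\rho_3$ (for instance, a three-dimensional Bieberbach group with point group $D_3$ acting on $\Z^2$ via $\rho_3$). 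Forming the $(c+1)$-fold direct sum at the level of modules and $2$-cocycles, exactly as in the proof of Theorem~\ref{examp1}, gives a torsion-free extension $0 \to \Z^{2(c+1)} \to \bar\Gamma \to D_3 \to 1$ inducing $\varphi_1 = (c+1)\rho_3$.

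Next I would feed $\varphi_1$, viewed as a finite subgroup $F = \varphi_1(D_3) \subseteq \GL_{2(c+1)}(\Z)$, into Lemma~\ref{lift finite group} with $r = 2(c+1)$ and the given step $c$: this produces a finitely generated group $N$ containing $N_{2(c+1),c}$ as a finite-index subgroup and a lift $\varphi : D_3 \to \Aut(N)$ with $\mu_\Q(\varphi(D_3)) = F$, so that $\varphi$ induces $\varphi_1$ on $\Gamma_1(N)/\Gamma_2(N) \cong \Z^{2(c+1)}$. One must check that $\varphi$ is faithful --- this is immediate because $\varphi_1$ already is faithful ($\rho_3$ is faithful on $D_3$), and $\varphi$ covers $\varphi_1$. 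Then Theorem~\ref{construct1}, applied with $i = 1$ and the torsion-free extension $\bar\Gamma$ constructed above, yields an almost-Bieberbach group $\Gamma$ with holonomy group $D_3$, translation subgroup a finite-index subgroup of $N$, and rational holonomy representation coinciding with $\varphi$ on $N_{2(c+1),c,\Q}$. This $\Gamma$ determines an infra-nilmanifold $M = \Gamma\backslash G$ modeled on the free $c$-step nilpotent Lie group on $2(c+1)$ generators, and by construction its abelianized rational holonomy representation is $\varphi_1 \cong (c+1)\rho_3$.

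Finally I would verify the Anosov criterion of Theorem~\ref{main}. The abelianized rational holonomy representation $(c+1)\rho_3$ has a single $\Q$-irreducible component, namely $\rho_3$, occurring with multiplicity $m = c+1$. Since $\rho_3$ is already a rational (indeed integral) representation of dimension $2$ with character $\chi_3$ taking only real values and $\nu_2(\chi_3) = 1$ --- so $\chi_3$ is afforded by a real representation and $\rho_3$ stays irreducible over $\R$ --- it splits into exactly one $\R$-irreducible component. We need this to exceed $\tfrac{c}{m} = \tfrac{c}{c+1} < 1$, which holds trivially. Hence Theorem~\ref{main} guarantees that $M$ admits an Anosov diffeomorphism, completing the proof.

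\textbf{Expected main obstacle.} The conceptually delicate point is obtaining, in minimal dimension, a torsion-free extension of $D_3$ inducing $\rho_3$ exactly (not merely some faithful integral representation containing $\rho_3$); invoking \cite{ak57-1} gives realizability of $D_3$ as a holonomy group but one must be careful that the cohomological construction of $\bar\Gamma$ --- splitting $H^2(D_3,\Z^{2(c+1)})$ as a direct sum according to the decomposition of the module and placing the nonzero class only on one $\rho_3$-summand (or repeating a fixed torsion-free class on each summand) --- indeed produces a torsion-free group. This is exactly the bookkeeping carried out in the proof of Theorem~\ref{examp1}, so the work reduces to checking it goes through verbatim with $\psi = \rho_3$, $n = 2$, and no trivial summands; everything else (Kuz'min's lifting, Theorem~\ref{construct1}, the Frobenius--Schur computation $\nu_2(\chi_3)=1$) is then routine.
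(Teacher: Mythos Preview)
Your plan has a genuine gap at the step where you invoke Theorem~\ref{construct1} with $i=1$: the torsion-free extension $0\to\Z^{2}\to\bar\Gamma_1\to D_3\to 1$ inducing $\rho_3$ that you want to start from does not exist, and neither does one for $(c+1)\rho_3$. To see this, restrict to the order-$2$ element $b$. With $\rho_3(b)=\left(\begin{smallmatrix}0&-1\\-1&0\end{smallmatrix}\right)$ one computes $(\Z^2)^{\langle b\rangle}=\{(a,-a)\}$ and $\operatorname{im}(I+\rho_3(b))=\{(a,-a)\}$, so $H^2(\langle b\rangle,\Z^2_{\rho_3})=(\Z^2)^{\langle b\rangle}/N_b\Z^2=0$. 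Hence every extension of $D_3$ by $\Z^2_{\rho_3}$ splits over $\langle b\rangle$ and therefore contains $2$-torsion; the same holds for $\Z^{2(c+1)}$ with action $(c+1)\rho_3$ since the cohomology just adds up. (This is consistent with the fact that the only closed flat $2$-manifolds are the torus and the Klein bottle, so no $2$-dimensional Bieberbach group can have holonomy $D_3$; your parenthetical ``three-dimensional Bieberbach group \ldots acting on $\Z^2$'' is a contradiction in terms.) Consequently the cocycle bookkeeping from the proof of Theorem~\ref{examp1} cannot be carried over verbatim here, because in that proof the trivial $\rho_1$-summands are precisely what absorb the torsion obstruction, and you have removed them.

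The paper circumvents this by applying Theorem~\ref{construct1} at level $i=2$ instead. After lifting $\varphi_1=(c+1)\rho_3$ to $\varphi:D_3\to\Aut(N)$ via Lemma~\ref{lift finite group}, one computes the induced action $\varphi_2$ on $\Gamma_2(N)/\Gamma_3(N)$ explicitly on the commutators $[x_i,x_j]$ and finds that it contains a $\Z[D_3]$-submodule $\Q$-equivalent to $\rho_1\oplus\rho_2\oplus\rho_3$. After passing to a finite-index overgroup $N'$ via Lemma~\ref{M} so that $\varphi_2$ is totally reducible over $\Z$, this summand is $\Z$-equivalent to either $\rho_1\oplus\rho_2\oplus\rho_3$ or $\rho_1\oplus\rho_2\oplus\rho_3'$, and both of these \emph{do} occur as holonomy representations of $4$-dimensional Bieberbach groups (here the $\rho_1$-summand supplies the nonvanishing class in $H^2(\langle b\rangle,-)$). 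The direct-sum cocycle trick then produces the required torsion-free extension $1\to\Gamma_2(N')/\Gamma_3(N')\to\bar\Gamma\to D_3\to 1$, and Theorem~\ref{construct1} applies with $i=2$. Your final paragraph checking Theorem~\ref{main} is correct and is exactly how the argument concludes.
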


Before we can give the proof of this proposition, we need to recall one more result from \cite{dv11-1} on totally reducible integral representations. A representation $\rho:F \rightarrow \GL_n(\Z)$ is said to be totally 
reducible if and only if $\Z^n$ splits as a direct sum of $\Z$--irreducible submodules.
The following lemma can be found in \cite[Lemma 4.3]{dv11-1}:
\begin{Lem}\label{M}
Let $N$ be a finitely generated torsion-free  nilpotent
group and $\varphi:F\to\Aut(N_\Q)$ a morphism, with $F$ a finite
group, such that $\varphi(f)(N)=N$ for all $f\in F$. Then there
exists a finitely generated subgroup $N^\prime$ of $N_\Q$ such that
\begin{itemize}
\item $\varphi(f)(N^\prime)=N^\prime$ for all $f\in F$, \item $N$ is a subgroup
of $N^\prime$ of finite index, and \item for all positive integers $i$, the induced
representation
$$\bar{\varphi}_i:F\to\Aut\left(\frac{\Gamma_i(N^\prime)}{\Gamma_{i+1}(N^\prime)}\right)=\GL(n_i,\Z)$$ is totally reducible.
\end{itemize}
\end{Lem}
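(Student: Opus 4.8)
The plan is to argue by induction on the nilpotency class $c$ of $N$ (equivalently of $N_\Q$), at each step enlarging the lattice so that $F$ acts ``in a split way'' on the top factor of the lower central series. Throughout I will use the following elementary \emph{projection trick}: if $W$ is a $\Q F$-module, $W=V_1\oplus\cdots\oplus V_r$ is a decomposition into $\Q$-irreducible $F$-submodules (Maschke), and $L\subseteq W$ is an $F$-invariant lattice, then the $F$-equivariant projections $\mathrm{pr}_k:W\to V_k$ send $L$ to $F$-invariant lattices and $\bigoplus_k\mathrm{pr}_k(L)$ is a totally reducible $F$-invariant lattice containing $L$ with finite index. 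For the base case $c=1$ the group $N\cong\Z^n$ is abelian, and applying the projection trick to $L=N$ inside $W=N\otimes\Q$ already produces the required $N'$.

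For the inductive step, let $c\geq 2$ and set $Z=\Gamma_c(N)=N\cap\gamma_c(N_\Q)$. Since $\gamma_c(N_\Q)$ is central in $N_\Q$, the subgroup $Z$ is central and fully characteristic in $N$, the quotient $\bar N=N/Z$ is again finitely generated, torsion-free, nilpotent of class $\leq c-1$, and $\bar N_\Q=N_\Q/\gamma_c(N_\Q)$. As $F$ acts on $Z$ and on $\bar N$ with the induced action on $\bar N_\Q$ preserving $\bar N$, the induction hypothesis gives an $F$-invariant subgroup $\overline{N'}$ with $\bar N\leq\overline{N'}\leq\bar N_\Q$ of finite index for which every $\bar\varphi_i$ is totally reducible. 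It remains to lift $\overline{N'}$ to $N_\Q$. Pick finitely many elements of $N_\Q$ mapping onto a generating set of $\overline{N'}$ under $\pi:N_\Q\to\bar N_\Q$, adjoin a generating set of $N$, let $M$ be the subgroup they generate and $M^F=\langle\varphi(f)(M)\mid f\in F\rangle$ its $F$-span; then $M^F$ is finitely generated (as $F$ is finite), $F$-invariant, contains $N$, and $\pi(M^F)=\overline{N'}$. The intersection $M^F\cap\gamma_c(N_\Q)$ contains $Z$, hence is a finitely generated $F$-invariant full lattice in the central vector group $\gamma_c(N_\Q)$; by the projection trick enlarge it to a totally reducible $F$-invariant lattice $Z'\supseteq M^F\cap\gamma_c(N_\Q)$ in $\gamma_c(N_\Q)$. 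Finally set $N'=M^F\,Z'$: since $Z'$ is central this is a finitely generated, $F$-invariant subgroup containing $N$, with $N'\cap\gamma_c(N_\Q)=Z'$ and $\pi$ inducing an isomorphism $N'/Z'\xrightarrow{\;\sim\;}\overline{N'}$; in particular $[N':N]=[\overline{N'}:\bar N]\,[Z':Z]<\infty$, so $N'$ is again a lattice in $N_\Q$.

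To finish I will check that $\bar\varphi_i$ is totally reducible for $N'$ for every $i$. For $i=c$ one has $\Gamma_c(N')=N'\cap\gamma_c(N_\Q)=Z'$, totally reducible by construction, and for $i>c$ the factor vanishes. For $i<c$, using $\gamma_c(N_\Q)\subseteq\gamma_i(N_\Q)$ one checks that $\pi$ restricts to a surjection $\Gamma_i(N')=N'\cap\gamma_i(N_\Q)\twoheadrightarrow\overline{N'}\cap\gamma_i(\bar N_\Q)=\Gamma_i(\overline{N'})$ with kernel $Z'$, compatibly for $i$ and $i+1$, whence $\Gamma_i(N')/\Gamma_{i+1}(N')\cong\Gamma_i(\overline{N'})/\Gamma_{i+1}(\overline{N'})$ as $F$-modules; the latter is totally reducible by the induction hypothesis. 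This closes the induction.

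I expect the genuine difficulty to lie in the gluing step: producing one finitely generated lattice in $N_\Q$ that simultaneously projects onto the inductively built $\overline{N'}$, meets the central factor $\gamma_c(N_\Q)$ in a prescribed totally reducible $F$-invariant lattice, and is itself $F$-invariant and contains $N$. What rescues it is precisely that $\gamma_c(N_\Q)$ is central, so enlarging the intersection with it is harmless for the group law, while the projection trick supplies both the $F$-invariance and the total reducibility on that bottom layer. The whole argument can alternatively be run inside the Mal'cev Lie algebra of $N_\Q$, choosing via Maschke an $F$-stable decomposition into $\Q$-irreducibles refining the filtration by the lower central ideals and then an adapted $F$-invariant lattice; I would prefer the group-theoretic induction above, as it avoids the bookkeeping of the $\log$/$\exp$ dictionary.
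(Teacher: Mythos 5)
Your argument is correct. Note first that the paper under review does not prove this lemma at all: it is quoted verbatim from \cite[Lemma 4.3]{dv11-1}, so there is no in-paper proof to compare against; your induction on the nilpotency class, with the Maschke projection trick applied to the bottom (central) layer $\gamma_c(N_\Q)$ at each stage, is a sound reconstruction and is very much in the spirit of the cited source. All the delicate points check out: $M^F$ is finitely generated and $F$-invariant, $M^F\cap\gamma_c(N_\Q)$ is a full $F$-invariant lattice in $\gamma_c(N_\Q)$ because it contains $Z=\Gamma_c(N)$, centrality of $Z'$ makes $N'=M^F Z'$ a subgroup with $N'\cap\gamma_c(N_\Q)=Z'$, and the computation $\Gamma_i(N')/\Gamma_{i+1}(N')\cong\Gamma_i(\overline{N'})/\Gamma_{i+1}(\overline{N'})$ for $i<c$ is justified since the kernel $Z'=\Gamma_c(N')$ of $\pi|_{\Gamma_i(N')}$ lies in $\Gamma_{i+1}(N')$. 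The only point worth making explicit is the convention that a $\Z F$-lattice is ``$\Z$-irreducible'' precisely when its rationalization is $\Q$-irreducible (the convention of \cite{bbnwz78-1,dv11-1}); under that convention each $\mathrm{pr}_k(L)$ is indeed $\Z$-irreducible and your projection trick does produce a totally reducible lattice, whereas under a stricter reading of irreducibility the statement would not even make sense.
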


\begin{proof}[Proof of  Proposition~\ref{D3}] 
Let $N_{2(c+1),c}$ be the free nilpotent group on $2(c+1)$ generators and of class $c$. 
Let us denote the generators of $N_{2(c+1),c}$ by $x_1, x_2, \ldots ,x_{2(c+1)}$. By Lemma~\ref{lift finite group} there exists a finitely generated subgroup $N$
of $N_{2(c+1),c,\Q}$ containing $N_{2(c+1),c}$ as a subgroup of finite index and a morphism $\varphi:D_3\to \Aut(N)$ 
such that the  induced morphism $\varphi_1: D_3 \rightarrow \Aut(\Gamma_1(N)/\Gamma_2(N)) = \GL_{2(c+1)}(\Z)$ is given by 
\[ \varphi_1 = \underbrace{\rho_3 \oplus \rho_3 \oplus \cdots \oplus  \cdots \oplus \rho_3}_{c+1\mbox{ times}}.\]
Now let $y_{i,j}=[x_i,x_j] $,  $(1\leq i<  j \leq 2(c+1))$. The natural projections $\bar{y}_{i,j}$ of these elements in 
$\Gamma_2(N)/\Gamma_3(N)$ form a basis of a free abelian group which is of finite index in this quotient. We claim that the subgroup, say $L$,
generated by 
$\bar{y}_{1,3}, \bar{y}_{1,4}, \bar{y}_{2,3}, \bar{y}_{2,4}$ is a $D_3$ submodule for the action of $D_3$ 
given by $\varphi_2: D_3 \rightarrow \Aut(\Gamma_2(N)/\Gamma_3(N))$.  In fact, we can compute this action explicitly.
Note that by the form of $\varphi_1$, we have that 

\[ \begin{array}{ll}
\varphi(a) (x_1)= x_2\mod \Gamma_2(N) \hspace{4mm}&  \varphi(b) (x_1) = x_2^{-1} \mod \Gamma_2(N) \\
\varphi(a) (x_2)= x_1^{-1}x_2^{-1}\mod \Gamma_2(N) \hspace{4mm}&  \varphi(b) (x_2) = x_1^{-1} \mod \Gamma_2(N) \\
\varphi(a) (x_3)= x_4\mod \Gamma_2(N) \hspace{4mm}&  \varphi(b) (x_3) = x_4^{-1} \mod \Gamma_2(N) \\
\varphi(a) (x_4)= x_3^{-1}x_4^{-1}\mod \Gamma_2(N) \hspace{4mm}&  \varphi(b) (x_4) = x_3^{-1} \mod \Gamma_2(N) 
\end{array}\]

From these identities we can compute the action of $a$ and $b$ on each $y_{i,j}$ $\bmod \hspace{1mm} \Gamma_3(N)$, e.g.
\begin{eqnarray*}
\varphi(a)(y_{1,4} ) & = & [\varphi(a) (x_1), \varphi(a)(x_4) ] \bmod \Gamma_3(N)\\ 
                     & = & [x_2, x_3^{-1}x_4^{-1} ] \bmod \Gamma_3(N)\\ 
                     & = & [x_2, x_3]^{-1}[x_2,x_4]^{-1} \bmod \Gamma_3(N)\\ 
                     & = & y_{2,3}^{-1}y_{2,4}^{-1}  \bmod \Gamma_3(N).
\end{eqnarray*}
When doing this for all generators, we find that w.r.t.\ the basis $ \bar{y}_{1,3}, \bar{y}_{1,4}, \bar{y}_{2,3}, \bar{y}_{2,4}$  of
$L$, the action of $a$ and $b$ are represented by matrices 
\[ A= \left( \begin{array}{cccccc}
 0 & 0 & 0 & 1 \\
 0 & 0 &-1 & 1 \\
 0 &-1 & 0 & 1 \\
 1 &-1 &-1 & 1 
\end{array}\right) \mbox{ and }
B= \left( \begin{array}{cccccc}
 0 & 0 & 0 & 1 \\
 0 & 0 & 1 & 0 \\
 0 & 1 & 0 & 0 \\
 1 & 0 & 0 & 0 
\end{array}\right) .
 \]
The character $\chi$ of this representation satisfies $\chi(1)=4$, $\chi(a)= 1$ and $\chi(b)=0$ and so $\chi=\chi_1 + \chi_2 + \chi_3$, from which it follows
that the representation determined by the matrices $A$ and $B$ is $\Q$--equivalent to $\rho_1\oplus  \rho_2 \oplus \rho_3$. Now, we use Lemma~\ref{M} to 
find a subgroup $N^\prime$ of $N_{2(c+1),c,\Q}$ which contains $N$ as a finite index subgroup, such that $\varphi$ can also be seen as a representation $\varphi:D_3 \to \Aut(N')$ and 
such that for each $i$ 
\[ \varphi_i: D_3 \to \Aut(\Gamma_i(N')/\Gamma_{i+1}(N'))\]
is totally reducible. By the above, we know that $\varphi_2$ must have a subrepresentation which is $\Q$--equivalent to $\rho_1\oplus \rho_2 \oplus \rho_3$.
Hence as a representation over $\Z$ it must contain a subrepresentation which is $\Z$-equivalent to $\rho=\rho_1\oplus \rho_2\oplus \rho_3$ or to 
 $\rho'=\rho_1\oplus \rho_2\oplus \rho_3'$. Both of these representations occur as the holonomy representation of a 4-dimensional Bieberbach group (\cite{bbnwz78-1}).
Hence, by an analogous argument as in the proof of Theorem~\ref{examp1}, we can conclude that there 
is a torsion-free extension
\[ 1 \to \frac{\Gamma_2(N')}{\Gamma_3(N')} \to \bar\Gamma \to D_3 \to 1\]
inducing the representation $\varphi_2$.

It follows from Theorem~\ref{construct1} that there exists an almost--Bieberbach group $\Gamma$ with holonomy group $D_3$ and such that 
the corresponding rational holonomy representation  $\psi:D_3\rightarrow \Aut(N_{2(c+1),c,\Q})$ coincides with $\varphi$. This implies that the abelianized rational holonomy representation is $\Q$--equivalent to $(c+1) \rho_3$, hence by our main Theorem~\ref{main}, the infra-nilmanifold 
determined by $\Gamma$ admits an Anosov diffeomorphism, which finishes the proof. 

\end{proof}

\begin{center}
\textbf{Acknowledgments}
\end{center}
We would like to thank the referee for his/her useful comments which have led to the inclusion of Section~\ref{applic}.


\begin{thebibliography}{10}

\bibitem{ak57-1}
Auslander, L. and Kuranishi, M.
\newblock {\em On the holonomy group of locally Euclidean spaces}.
\newblock Ann. of Math. (2), 1957, 65, pp. 411--415.

\bibitem{bbnwz78-1}
Brown, H., {B\"ulow}, R., Neu{b\"u}ser, J., Wondratscheck, H., and Zassenhaus,
  H.
\newblock {\em Crystallographic groups of four--dimensional Space}.
\newblock Wiley New York, 1978.

\bibitem{dm05-1}
Dani, S.~G. and Mainkar, M.~G.
\newblock {\em Anosov automorphisms on compact nilmanifolds associated with
  graphs}.
\newblock Trans. Amer. Math. Soc., 2005, 357 6, 2235--2251.

\bibitem{deki99-1}
Dekimpe, K.
\newblock {\em Hyperbolic automorphisms and {A}nosov diffeomorphisms on
  nilmanifolds}.
\newblock Trans. Amer. Math. Soc., 2001, 353 7, pp.\ 2859--2877.

\bibitem{deki11-1}
Dekimpe, K.
\newblock {\em What an infra-nilmanifold endomorphism really should be \ldots}.
\newblock Topol. Methods Nonlinear Anal., 2012, 40 1, 111--136.

\bibitem{dd03-2}
Dekimpe, K. and Deschamps, S.
\newblock {\em Anosov diffeomorphisms on a class of 2-step nilmanifolds}.
\newblock Glasgow.\ Math.\ J., 2003, 45 pp. 2269--280.

\bibitem{dv08-1}
Dekimpe, K. and Verheyen, K.
\newblock {\em Anosov diffeomorphisms on infra-nilmanifolds modeled on a free
  2-step nilpotent Lie group}.
\newblock Groups, Geometry and Dynamics, 2009, 3 4, pp. 555--578.

\bibitem{dv09-1}
Dekimpe, K. and Verheyen, K.
\newblock {\em Anosov diffeomorphisms on nilmanifolds modeled on a free
  nilpotent Lie group}.
\newblock Dynamical Systems -- an international journal, 2009, 24 1, pp.
  117--121.

\bibitem{dv11-1}
Dekimpe, K. and Verheyen, K.
\newblock {\em Constructing infra-nilmanifolds admitting an {A}nosov
  diffeomorphism}.
\newblock Adv. Math., 2011, 228 6, 3300--3319.

\bibitem{fg12-1}
Farrell, F.~T. and Gogolev, A.
\newblock {\em Anosov diffeomorphisms constructed from {$\pi_k({\rm
  Diff}(S^n))$}}.
\newblock J. Topol., 2012, 5 2, 276--292.

\bibitem{hers53-1}
Herstein, I.~N.
\newblock {\em Finite multiplicative subgroups in division rings}.
\newblock Pacific J. Math., 1953, 3 121--126.

\bibitem{hj96-1}
Horn, R.~A. and Johnson, C.~R.
\newblock {\em Matrix analysis}.
\newblock Cambridge University Press, 1996.

\bibitem{isaa76-1}
Isaacs, I.~M.
\newblock {\em Character theory of finite groups}, volume~69 of {\em Pure and
  Applied Mathematics}.
\newblock New York-San Francisco-London: Academic Press, 1976.

\bibitem{kuzm99-1}
Kuz'min, Y.
\newblock Homological methods in group theory.
\newblock In {\em Summer {S}chool in {G}roup {T}heory in {B}anff, 1996},
  volume~17 of {\em CRM Proc. Lecture Notes}, pages 81--98. Amer. Math. Soc.,
  Providence, RI, 1999.

\bibitem{laur03-1}
Lauret, J.
\newblock {\em Examples of {A}nosov diffeomorphisms}.
\newblock J. Algebra, 2003, 262 1, 201--209.

\bibitem{laur08-1}
Lauret, J.
\newblock {\em Rational forms of nilpotent {L}ie algebras and {A}nosov
  diffeomorphisms}.
\newblock Monatsh. Math., 2008, 155 1, 15--30.

\bibitem{lw08-1}
Lauret, J. and Will, C.~E.
\newblock {\em On {A}nosov automorphisms of nilmanifolds}.
\newblock J. Pure Appl. Algebra, 2008, 212 7, 1747--1755.

\bibitem{lw09-1}
Lauret, J. and Will, C.~E.
\newblock {\em Nilmanifolds of dimension {$\leq 8$} admitting {A}nosov
  diffeomorphisms}.
\newblock Trans. Amer. Math. Soc., 2009, 361 5, 2377--2395.

\bibitem{main06-1}
Mainkar, M.~G.
\newblock {\em Anosov automorphisms on certain classes of nilmanifolds}.
\newblock Glasg. Math. J., 2006, 48 1, 161--170.

\bibitem{main12-1}
Mainkar, M.~G.
\newblock {\em Anosov {L}ie algebras and algebraic units in number fields}.
\newblock Monatsh. Math., 2012, 165 1, 79--90.

\bibitem{mw07-1}
Mainkar, M.~G. and Will, C.~E.
\newblock {\em Examples of {A}nosov {L}ie algebras}.
\newblock Discrete Contin. Dyn. Syst., 2007, 18 1, 39--52.

\bibitem{malf97-3}
Malfait, W.
\newblock {\em Anosov diffeomorphisms on nilmanifolds of dimension at most
  six.}
\newblock Geometriae Dedicata, 2000, 79 (3), 291--298.

\bibitem{payn09-1}
Payne, T.~L.
\newblock {\em Anosov automorphisms of nilpotent {L}ie algebras}.
\newblock J. Mod. Dyn., 2009, 3 1, 121--158.

\bibitem{port72-1}
Porteous, H.~L.
\newblock {\em Anosov diffeomorphisms of flat manifolds.}
\newblock Topology, 1972, 11, pp. 307--315.

\bibitem{serr77-1}
Serre, J.-P.
\newblock {\em Linear representations of finite groups}, volume~42 of {\em
  Graduate Texts in Math.}
\newblock New York-Heidelberg-Berling: Springer--Verlag, 1977.

\bibitem{st87-1}
Stewart, I. and Tall, D.
\newblock {\em Algebraic Number Theory, second edition}.
\newblock Chapman and Hall Mathematics Series. Chapman and Hall, London, 1987.

\end{thebibliography}
\end{document}